\tikzset{
  optree/.style={scale=.5,thick,grow'=up,level distance=10mm,inner sep=1pt},
  comp/.style={draw=none,circle,fill,line width=0,inner sep=0pt},
  dot/.style={draw,circle,fill,inner sep=0pt,minimum width=3pt},
  circ/.style={draw,circle,inner sep=1pt,minimum width=4mm},
  emptycirc/.style={draw,circle,inner sep=1pt,minimum width=2mm},
  root/.style={level distance=10mm,inner sep=1pt},
  leaf/.style={draw=none,circle,fill,line width=0,inner sep=0pt},
  nodot/.style={draw,circle,inner sep=1pt},
}
\newtheorem*{cnj*}{Conjecture}
\newtheorem*{thm*}{Theorem}
\newtheorem{thm}{Theorem}[section]
\newtheorem{crl}[thm]{Corollary}
\newtheorem{prp}[thm]{Proposition} 
\newtheorem{lem}[thm]{Lemma}
\newtheorem{cnj}[thm]{Conjecture}
\theoremstyle{definition}
\newtheorem{dfn}[thm]{Definition}
\theoremstyle{remark}
\newtheorem{rmk}[thm]{Remark}
\DeclareMathOperator{\rev}{rev}
\newcommand{\operadia}{\href{https://operadia.pythonanywhere.com}{Operadia}}
\newcommand{\oeis}[1]{\href{https://oeis.org/#1}{{#1}}}
\newcommand{\Pp}{\mathcal{P}}
\newcommand{\F}{\mathcal{F}}
\newcommand{\R}{\mathcal{R}}
\newcommand{\LL}{\mathrm{LL}}
\newcommand{\RR}{\mathrm{RR}}
\newcommand{\RL}{\mathrm{RL}}
\newcommand{\NAP}{\mathrm{NAP}}
\newcommand{\Ass}{\mathrm{Ass}}
\newcommand{\Pois}{\mathrm{Poiss}}
\newcommand{\PL}{\mathrm{PreLie}}
\newcommand{\Lie}{\mathrm{Lie}}
\newcommand{\Com}{\mathrm{Com}}
\newcommand{\Perm}{\mathrm{Perm}}
\newcommand{\LA}{\mathrm{LieAdm}}
\newcommand{\Mag}{\mathrm{Mag}}
\newcommand{\CMag}{\mathrm{ComMag}}
\newcommand{\AMag}{\mathrm{SkewMag}}
\newcommand{\Nil}{\mathrm{Nil_2}}
\newcommand{\ANil}{\mathrm{SkewNil_2}}
\newcommand{\CNil}{\mathrm{ComNil_2}}
\newcommand{\Leib}{\mathrm{Leib}}
\newcommand{\Zinb}{\mathrm{Zinb}}
\newcommand{\LeftNil}{\mathrm{LeftNil}}
\newcommand{\KDNAP}{\mathrm{NAP}^!}
\newcommand{\Alia}{\mathrm{Alia}}
\newcommand{\LeftAlia}{\mathrm{LeftAlia}}
\newcommand{\KDAlia}{\mathrm{Alia}^!}
\newcommand{\LieAdm}{\mathrm{LieAdm}}
\newcommand{\KDLieAdm}{\mathrm{LieAdm}^!}
\newcommand{\Bess}{\mathrm{Bess}}
\newcommand{\ditto}[1][.4pt]{\xrfill{#1}~\textquotedbl~\xrfill{#1}}
\title{On Hilbert series of Koszul operads and a classification result for set-operads}
\date{}
\author{Paul Laubie}
\begin{document}

\maketitle

\abstract{Motivated by numerous examples in the literature, we state a conjecture on the Hilbert series of Koszul symmetric operads generated by one element of arity $2$. 
We prove this conjecture for all Koszul symmetric set-operads generated by one element of arity $2$ by explicitly classifying those. 
There are $11$ such operads; $4$ of them are new.}

\section{Introduction}
The Koszul duality and the Koszul property were originally introduced by Priddy in 1970 \cite{P70} for algebras in the non-homogenous case, namely the quadratic-linear case.
They were then extended to the homogenous case for quadratic operads generated by elements of arity $2$ by Ginzburg and Kapranov in 1994 \cite{GK94} and for general quadratic operads by Getzler and Jones in 1994 \cite{GJ94}. 
Koszul duality and Koszul property are very useful tools in homological algebra; indeed, they allow, for  example, the computation of manageable resolutions for algebras and operads. In \cite[Conjecture 7.1]{QA}, Polishchuk and Positselski proposed the following remarkable conjecture:
\begin{cnj*}
  Let $A$ be a finitely generated Koszul algebra, then the Hilbert series of $A$ is a rational function.
\end{cnj*}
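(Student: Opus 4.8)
The plan is to build everything on the fundamental Koszul duality identity. Write $A = T(V)/(R)$ with $V$ finite-dimensional, concentrated in degree $1$, and $R \subseteq V \otimes V$, and let $A^!$ denote the quadratic Koszul dual, which is again finitely generated. The defining feature of the Koszul property is that the minimal free resolution of the ground field is the Koszul complex; comparing Euler characteristics in each internal degree yields the functional equation
$$ H_A(t)\,H_{A^!}(-t) = 1, \qquad H_A(t) = \sum_{n\geq 0} \dim(A_n)\, t^n. $$
First I would exploit this to reduce the problem: since $t \mapsto -t$ and $f \mapsto 1/f$ both preserve rationality, $H_A$ is rational if and only if $H_{A^!}$ is. Rationality is therefore a \emph{self-dual} property within the class of finitely generated Koszul algebras, which is encouraging but, on its own, circular, since it merely relates the unknown to another instance of the same unknown.

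The next step is to settle the subclass where one genuinely has structural control, namely the G-quadratic (quadratic Gröbner) algebras. If $R$ admits a finite quadratic Gröbner basis for some admissible order on words in a basis of $V$, then $\dim(A_n)$ equals the number of normal words of length $n$, that is, words avoiding the finite set of leading quadratic monomials. Such normal words form a regular language, so a transfer-matrix count realizes $H_A$ as the generating function of a regular language, which is rational. Because G-quadratic implies Koszul, this disposes of a large and concrete family; and in the set-operadic analogue pursued later in this paper it is essentially this combinatorial mechanism that will be made to work in every remaining case.

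The main obstacle is the general Koszul algebra that is \emph{not} G-quadratic: there exist Koszul algebras whose relation ideal has no finite Gröbner basis in any coordinate system, and for these there is neither a combinatorial description of $\dim(A_n)$ nor any known bound on the complexity of $A^!$. I expect this to be the crux, precisely because the functional equation offers no traction here: proving $H_{A^!}$ rational is literally the same problem, and the class of finitely generated Koszul algebras is rich enough that the growth of $A^!$ could in principle be very wild. One would need a genuinely new structural invariant forcing the sequence $\dim(A_n)$ to satisfy a linear recurrence. Absent such an invariant, the honest conclusion is that the statement should be treated as open, and that one should remain alert to possible counterexamples, paralleling the known failure of the analogous rationality statement for arbitrary finitely presented graded algebras; deciding whether the Koszul hypothesis is strong enough to exclude such pathologies is exactly where the difficulty lies.
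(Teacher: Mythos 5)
This statement is not a theorem of the paper: it is Conjecture 7.1 of Polishchuk and Positselski, quoted verbatim, and the paper explicitly says it is still open. There is therefore no proof in the paper to compare against, and your proposal --- correctly --- does not supply one either. Your honest conclusion that the statement should be treated as open is exactly the right assessment, and the partial content you do establish is sound: the Koszul duality identity $H_A(t)\,H_{A^!}(-t)=1$ does show that rationality is preserved under duality (though, as you note, this is circular as a proof strategy), and the G-quadratic (PBW) case is genuinely settled by your argument, since the normal words with respect to a finite quadratic Gr\"obner basis are the words avoiding a finite set of length-two factors, hence form a regular language with rational generating function. Your diagnosis of the crux is also accurate: for Koszul algebras admitting no quadratic Gr\"obner basis in any coordinates, no combinatorial model for $\dim A_n$ is available, and the analogous rationality statement is known to fail for general finitely presented graded algebras, so the whole difficulty is whether Koszulness excludes such pathologies. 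The only caution is one of framing: what you have written is a correct proof of a special case plus a lucid statement of why the general case is hard, not a proof of the conjecture, and it should be presented as such --- which is precisely how the paper presents it, before moving on to the operadic variants where the classification of set-operads makes an unconditional verification possible.
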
 
This conjecture is still open and can be extended to symmetric operads following the insight of Khoroshkin and Piontkovski \cite{KP}:
\begin{cnj*}
  Let $\Pp$ a finitely generated Koszul symmetric operad, then the Hilbert series of $\Pp$ is differential algebraic over $\mathbb{Z}[t]$. 
\end{cnj*}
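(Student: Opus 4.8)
The plan is to convert the homological hypothesis into a functional equation and then appeal to the closure properties of differentially algebraic series. First I would record the generating-series consequence of Koszulness: the Koszul complex of $\Pp$ is acyclic in every positive arity, so taking Euler characteristics produces a functional relation between $f_{\Pp}$ and $f_{\Pp^!}$. For the operads of principal interest here, those generated in arity two, this is precisely the Ginzburg--Kapranov relation \cite{GK94}, namely that $f_{\Pp}$ and $f_{\Pp^!}$ are mutually compositional inverses up to the standard sign twist, $f_{\Pp^!}\bigl(-f_{\Pp}(-t)\bigr)=t$. Since the differentially algebraic power series form a field that is moreover closed under composition and under compositional inversion, this identity shows that the conjecture holds for $\Pp$ if and only if it holds for its Koszul dual $\Pp^!$; the statement is therefore self-dual, and one may work with whichever of the pair is combinatorially more accessible.

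The second, and decisive, ingredient would be to produce an actual differential equation, for which I would pass to the associated shuffle operad and a Gr\"obner basis, in the spirit of Khoroshkin and Piontkovski \cite{KP}. An operad presented by a Gr\"obner basis has the same generating series as its monomial (leading-term) operad, whose dimensions count the normal tree-monomials; when the language of these normal monomials is sufficiently tame---for instance a finite Gr\"obner basis, or a regular language of leading terms---the species equation governing the tree count is algebraic, so its solution is algebraic and \emph{a fortiori} differentially algebraic. The remaining reduction would then be to show that every finitely generated Koszul operad admits such a combinatorially controlled presentation.

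The hard part, and the reason the conjecture is still open, lies exactly in this last reduction. Koszulness is a homological condition, and the Euler-characteristic relation above is a single equation in the two unknown series $f_{\Pp}$ and $f_{\Pp^!}$, far too weak to isolate a differential equation by itself; one genuinely needs an independent handle on one member of the pair. There is no general mechanism that manufactures a finite or regular Gr\"obner basis out of Koszulness alone---Koszul operads need not even have quadratic Gr\"obner bases---so PBW-type arguments cannot be invoked uniformly. This is where an explicit classification becomes indispensable: in the binary set-operad setting treated in this paper the list of Koszul operads is finite, so the missing combinatorial handle can be supplied by hand, computing each generating series and verifying differential algebraicity case by case, whereas in full generality no such substitute for the classification is presently available.
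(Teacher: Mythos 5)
The statement you were asked to prove is an open conjecture: the paper attributes it to Khoroshkin and Piontkovski, presents it only as motivation, and offers no proof of it anywhere. The only result actually established is the far weaker special case of Koszul \emph{set}-operads generated by one element of arity $2$, obtained by an explicit finite classification ($11$ operads) and a case-by-case verification of the Hilbert series. So there is no proof in the paper against which your argument can be compared, and your own text is candid that it is not a proof either: you correctly identify the Ginzburg--Kapranov relation $f_{\Pp^!}(-f_{\Pp}(-t))=t$ as essentially the only Hilbert-series consequence that Koszulness supplies in general, you correctly observe that a single functional equation in the two unknowns $f_{\Pp}$ and $f_{\Pp^!}$ cannot by itself force differential algebraicity, and you correctly locate the irreducible difficulty in the absence of any general combinatorial control (finite Gr\"obner bases, PBW presentations, regular languages of normal monomials) implied by Koszulness alone. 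That diagnosis matches what the paper does: it substitutes an explicit classification for the missing combinatorial handle, exactly as you suggest in your final paragraph.

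Two small cautions on the sketch itself, should you ever try to push it further. First, the relation $f_{\Pp^!}(-f_{\Pp}(-t))=t$ in this clean compositional form is specific to operads generated in arity $2$; for a general finitely generated Koszul operad the Euler-characteristic identity coming from the Koszul complex is more involved, so the ``self-duality'' reduction needs restating before it applies to the conjecture as stated. Second, the paper's own remark about $\Com\circ\Com$, whose Hilbert series is $\exp(\exp(t)-1)-1$, shows that even Koszul operads with quadratic (distributive-law) presentations and two generators can have Hilbert series that are differentially algebraic only of higher order; this is consistent with the conjecture but is a useful reminder that ``algebraic, hence differentially algebraic'' via tame normal monomials cannot be the whole story even in combinatorially pleasant cases.
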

However, in the case of Koszul operads generated by one element of arity $2$, one may expect a stronger result to hold. The number of examples of Koszul symmetric operads generated by one element of arity $2$ appearing in the literature grows, and in most cases the Hilbert series of these operads can be computed; see \cite{Operadia,Zin}. 
In all the examples we know, the Hilbert series of these operads are differential algebraic of order $1$ over $\mathbb{Q}[t]$. We conjecture that this is always the case:
\begin{cnj*}\ref{cnj:main}
  Let $\Pp$ a Koszul symmetric operad generated by one element in arity $2$, then the Hilbert series of $\Pp$ is differential algebraic of order $1$ over $\mathbb{Z}[t]$. 
\end{cnj*}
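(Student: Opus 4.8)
The plan is to work with the \emph{exponential generating series} $f_{\Pp}(t)=\sum_{n\ge 1}\frac{\dim\Pp(n)}{n!}\,t^{n}$ and to extract an order-$1$ algebraic differential equation from the one structural constraint that Koszulness provides at this level of generality. Writing $\Pp^{!}$ for the Koszul dual, the Ginzburg--Kapranov functional equation states that the two series are compositional inverses up to a sign twist,
\begin{equation*}
  f_{\Pp}\bigl(-f_{\Pp^{!}}(-t)\bigr)=t .
\end{equation*}
Since dualising the quadratic data preserves the arity-$2$ generating space, $\Pp^{!}$ is again generated by a single binary operation, so both sides of this equation are of the constrained type under study. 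The first observation I would record is that the conjecture for $\Pp$ is \emph{equivalent} to the conjecture for $\Pp^{!}$: a relation $P(t,f_{\Pp},f_{\Pp}')=0$ with $P\in\mathbb{Z}[t][x,y]$ is turned, under compositional inversion, into a relation of the same shape for the inverse series $g:=-f_{\Pp^{!}}(-t)$ (substituting $g$ into $P$ and using $f_{\Pp}'\bigl(g(t)\bigr)=1/g'(t)$ yields, after clearing denominators, such a relation for $g$). This symmetry is what I would exploit to run an induction or to reduce to whichever of $\Pp,\Pp^{!}$ has the more tractable presentation.

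The second step is to replace $\Pp$ by a combinatorial model with the same Hilbert series. Passing to the associated shuffle operad (whose arity-wise dimensions agree with those of $\Pp$) and choosing an admissible order, I would compute an operadic Gröbner basis; whenever it is quadratic, the Hilbert series of $\Pp$ equals that of the monomial operad spanned by the normal-form shuffle tree-monomials, namely the trees on one binary vertex avoiding the finitely many leading terms of the relations. These normal forms are a finitely constrained family of labelled trees, and the third step is to set up the resulting system of functional equations for $f_{\Pp}$ obtained from the root decomposition of such a tree. The combinatorial heart is then to prove that, because there is exactly one kind of internal vertex, this system collapses to a single polynomial relation among $t$, $f_{\Pp}$ and $f_{\Pp}'$: morally, a tree is assembled by grafting at a single root, so the bivariate series recording that grafting is governed by one differentiation, which is the mechanism behind the order-$1$ equations visible in every known instance ($f_{\Com}'=f_{\Com}+1$, $(1-t)f_{\Lie}'=1$, the tree-function equation for $\PL$, and so on).

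The main obstacle is twofold, and it is precisely what keeps the statement conjectural. First, not every Koszul operad admits a quadratic---or even a finite---Gröbner basis, so the monomial replacement in the second step is not available in general; to make the argument unconditional I would instead have to run it on the Koszul dual cooperad and its cobar differential, reading the order-$1$ equation off the concentration of homology guaranteed by Koszulness rather than off normal forms, and I do not currently see how to keep the differential order bounded by $1$ in that more intrinsic setting. Second, even granting a monomial model, proving that the tree system collapses to order \emph{exactly} one---rather than to some a priori unbounded order---appears to require a uniform control of every admissible arity-$3$ relation space $R\subseteq\F(E)(3)$, and I know of no representation-theoretic invariant of $R$ that forces this. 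My proposed route around the second difficulty is inductive: first classify the \emph{set-theoretic} relations $R$, where the possibilities are finite and explicit, read off the universal first-order equation they produce, and then attempt to lift that equation to an arbitrary $S_{3}$-invariant subspace $R$ by deforming the linearised set-operad. It is exactly this passage from classified set-theoretic relations to arbitrary linear ones that I expect to be the crux, and where a genuinely new idea is required.
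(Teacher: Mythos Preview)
The statement you are attempting is precisely Conjecture~\ref{cnj:main} in the paper, and the paper does \emph{not} prove it in general. It remains an open conjecture there; what the paper establishes is only the special case of \emph{set}-operads, and by a route entirely different from yours: rather than extracting a differential equation from structural data, the author exhaustively classifies all Koszul set-operads generated by one binary element---there are exactly eleven, listed in Theorem~\ref{thm:main}---and then verifies the order-$1$ equation for each Hilbert series individually. No uniform mechanism producing the equation is offered or claimed.

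Your proposal is therefore an attack on the full conjecture, and you correctly identify where it breaks. The duality observation in your first paragraph is valid and useful, but it only transports the conjecture across the Koszul-dual pair rather than proving it for either side. The Gr\"obner-basis reduction in your second step is the standard move, and the obstruction you name is real: Koszul operads need not admit a quadratic (or even finite) Gr\"obner basis in any known ordering, so the passage to a monomial model is not guaranteed. Even when that model exists, your third step---showing that the tree-enumeration system collapses to a single relation in $t,f,f'$---is the genuine heart of the problem, and nothing in your sketch forces the order to be $1$; the ``single kind of internal vertex'' heuristic is suggestive but not a proof, since the forbidden-pattern constraints from the leading terms can couple several auxiliary series in ways that a priori raise the differential order. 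Your final paragraph concedes exactly this.

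In short: the gaps you name are the real gaps, the paper does not close them, and your proposed inductive lift from set-theoretic relations to arbitrary linear ones is an interesting programme but remains, as you say, the place where a new idea is required.
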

A classification of Koszul symmetric operads generated by one element of arity $2$ would be enough to prove or disprove this conjecture. 
Some classification results on Koszul operads under some hypothesis already exist; indeed, Bremner and Dotsenko \cite{Assdep} classified all the Koszul operads of associator-dependent algebras generated by one element of arity $2$, which all satisfy Conjecture \ref{cnj:main}. 
In this paper, we classify all the Koszul symmetric set-operads generated by one element of arity $2$ and check our conjecture on those operads. We get the following classification:
\begin{thm*}\ref{thm:main}
    Let $\Pp$ a Koszul set-operad generated by one element of arity $2$, then $\Pp$ is either isomorphic to one of the $7$ operads $\Mag$, $\NAP$, $\Ass$, $\Perm$, $\LA^!$, $\CMag$ or $\Com$, or to one of the $4$ following operads:
    \begin{itemize}
      \item $\Pp_{10}$ which is built from $\CMag$ and $\ANil$ with the relation $[a.b, c]=0$;
      \item $\Pp_{2;2}$ which is built from $\CMag$ and $\ANil$ with the relation $[a, b].c=0$;
      \item $\Pp_{11}$ which is the connected sum of $\CMag$ and $\AMag$;
      \item $\Pp_{2;10}$ which is the connected sum of $\CMag$ and $\ANil$.
    \end{itemize}
\end{thm*}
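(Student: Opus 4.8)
The plan is to turn the classification into a finite combinatorial search and then use Koszulness as a sharp filter. First I would pin down the generator. Because $\Pp$ is a set-operad generated by a single binary operation $\mu$, its arity-$2$ component is one $S_2$-orbit, so the stabilizer of $\mu$ is either trivial or all of $S_2$. This yields three root cases: the free orbit of size two (a generic product, as in $\Mag$), the trivial orbit (a symmetric product, as in $\CMag$), and — allowing the sign twist on the permutation basis that the paper's skew operads require — the sign orbit (an antisymmetric product, as in $\AMag$). Every candidate is then a set-operadic quadratic quotient of one of these three free objects.

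Next, within each root case I would enumerate the admissible quadratic relations. In the free magmatic operad the arity-$3$ component is spanned by the twelve tree-monomials $\mu(\mu(x_a,x_b),x_c)$ and $\mu(x_a,\mu(x_b,x_c))$ (and fewer when $\mu$ is symmetric or skew), and a \emph{set-theoretic} quadratic relation can only take one of two shapes: an identification $m_1=m_2$ of two monomials (up to sign in the skew case) or a nilpotence relation $m=0$ sending a monomial to the basepoint. Since the relations must be $S_3$-stable and must close into a genuine quotient set-operad, the possibilities organize into a finite, explicitly listable family of presentations, indexed by the $S_3$-submodule they generate. I would discard those that collapse the operad or are inconsistent, checking consistency by confluence of the induced rewriting system (equivalently, by computing the generated congruence arity by arity).

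Then comes the Koszulness filter, where the real cutting happens. Since we work with set-operads, I would lean on Vallette's criterion relating Koszulness of a set-operad to the Cohen--Macaulay property of its associated family of partition posets: this is the natural certification device and handles both the positive cases and, by exhibiting a non-Cohen--Macaulay poset, the negative ones. In parallel I would use Gröbner-basis methods in the associated shuffle operad for fast pruning and for extracting normal forms and Hilbert series, and as a purely necessary test the functional equation linking the Hilbert series of a Koszul operad to that of its Koszul dual. These should leave exactly eleven survivors, and I would finish by matching each to the statement: recognizing the seven classical operads and realizing the four new ones as the advertised gluings, namely $\Pp_{10}$ and $\Pp_{2;2}$ built from $\CMag$ and $\ANil$, and $\Pp_{11}$, $\Pp_{2;10}$ as the connected sums of $\CMag$ with $\AMag$ and with $\ANil$ respectively, then verifying these eleven are pairwise non-isomorphic.

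The main obstacle is the combination of exhaustiveness and positive certification. On the one hand, I must be sure the list of $S_3$-stable set-theoretic relations is genuinely complete so that no presentation is silently dropped, and that every discarded candidate is \emph{proved} non-Koszul rather than merely failing a heuristic — the Hilbert-series functional equation is only necessary, so it cannot close the negative cases on its own. On the other hand, the positive Koszulness proofs for the four new operads — whether via Cohen--Macaulayness of their partition posets or via explicit quadratic Gröbner (PBW) bases — are delicate and will carry the weight of the argument.
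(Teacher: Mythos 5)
Your overall architecture (reduce to a finite list of quadratic presentations, then filter by Koszulness) is the paper's strategy, but your setup of the finite search contains two errors that would derail it. First, there is no ``sign orbit'' root case: the arity-$2$ component of a set-operad is a set with an $\mathfrak{S}_2$-action, so the orbit of the generator is either free (giving quotients of $\Mag$) or a fixed point (giving quotients of $\CMag$, where the only quadratic options are $\CMag$ and $\Com$). An antisymmetric product $\mu^{(1\,2)}=-\mu$ cannot be encoded in sets, and the paper stresses that $\AMag$, $\ANil$, $\CNil$ are \emph{not} set-operads; they only appear after polarizing ($[a,b]=ab-ba$, $a.b=ab+ba$) certain quotients of $\Mag$, as a device to exhibit quadratic-monomial presentations. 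Second, admitting nilpotence relations $m=0$ presumes a basepoint, i.e.\ pointed-set-operads, which is a strictly larger class; under that convention $\Nil_2$ (with $(ab)c=a(bc)=0$) would be a quadratic-monomial, hence Koszul, ``set-operad'' on one binary generator, and the theorem as stated would be false. The paper's Lemma~\ref{Lemma:KS} pins down the correct combinatorial object: an $\mathfrak{S}_3$-equivariant \emph{equivalence relation} on the twelve monomials of $\Mag(3)$, nothing more. With that lemma the enumeration is organized by how equivalence classes interact with the two $\mathfrak{S}_3$-orbits of $\Mag(3)$, yielding $57$ presentations, cut down by the reverse automorphism and explicit isomorphisms.

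On the filter: your proposed certificates (Vallette's Cohen--Macaulay poset criterion, quadratic Gr\"obner bases) are legitimate tools but not what the paper uses, and your worry that the Ginzburg--Kapranov functional equation ``cannot close the negative cases on its own'' is only partly borne out. The paper disposes of $25$ candidates by finding a negative coefficient in $\rev(-f_\Pp(-t))$ directly, and handles the four stubborn ones by refining the same test (exploiting self-duality of $\Pp_{3;3}$, or a weight grading by the bracket after polarization for $\Pp_{5;5}$ and $\Pp_{4;9}$, or recognizing a known non-Koszul operad for $\Pp_{4;6}$). The four new Koszul operads are certified not by poset topology but by observing that their polarized presentations are quadratic \emph{monomial}, hence Koszul by Proposition~\ref{Proposition:K}. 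You should also be aware that the necessary identification and non-isomorphism checks among the survivors are carried by explicit computation of equivalence classes and of Hilbert series, not left to a general principle.
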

Four new Koszul symmetric set-operads generated by one element of arity $2$ are discovered, all satisfying our conjecture.

\section{Koszul property}\label{sec:Kp}

The Koszul property is a well studied property on operads; see \cite{AlgOp} for an introduction to operads and Koszul duality for operads. Most of the usual operads like $\Ass$, $\Lie$, or $\Com$ are known to be Koszul. It is a well known fact that Koszul operads are quadratic. However, there is no general method to determine if an operad is Koszul or not.

The main tool to prove that a quadratic operad is not Koszul is the Ginzburg-Kapranov criterion using Hilbert series of the operad.
\begin{prp}
	Let $\Pp$ a Koszul operad generated by elements of arity $2$, and $\Pp^!$ its Koszul dual, then:
	$$f_{\Pp^!}(-f_\Pp(-t))=t.$$
	In particular, if the compositional reverse of $-f_{\Pp}(-t)$ has a nonpositive term, then $\Pp$ is not Koszul.
\end{prp}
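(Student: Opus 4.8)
The plan is to obtain the functional equation as the Euler characteristic identity of the Koszul complex, and then to read off the criterion from the nonnegativity of the coefficients of $f_{\Pp^!}$. I will use two standard facts. First, the exponential generating function is multiplicative for the composition product of $S$-modules: if $\mathcal{M}$ and $\mathcal{N}$ are $S$-modules with $\mathcal{N}(0)=0$, then $f_{\mathcal{M}\circ\mathcal{N}}=f_{\mathcal{M}}\circ f_{\mathcal{N}}$, and the same holds for the signed (Euler characteristic) generating functions of differential graded $S$-modules. Second, $\Pp$ being Koszul is equivalent to the acyclicity of the Koszul complex built from $\Pp$ and its Koszul dual cooperad, which I denote $\mathcal{C}$; the arity-$n$ component of $\mathcal{C}$ sits in homological degree $n-1$, and its linear dual is, up to suspension and a sign-representation twist, $\Pp^!(n)$, so that $\dim\mathcal{C}(n)=\dim\Pp^!(n)$. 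I fix $f_\Pp(t)=\sum_{n\ge1}\tfrac{\dim\Pp(n)}{n!}t^n$ and recall that $\Pp(1)=I$ is one-dimensional, so $f_\Pp(t)=t+O(t^2)$. Since $\Pp$ is concentrated in homological degree $0$ its signed generating function is $f_\Pp$ itself, while that of $\mathcal{C}$ is $\chi_{\mathcal{C}}(t)=\sum_{n\ge1}(-1)^{n-1}\tfrac{\dim\Pp^!(n)}{n!}t^n=-f_{\Pp^!}(-t)$.

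The heart of the argument is to apply the Euler characteristic to the left Koszul complex $\mathcal{C}\circ_\kappa\Pp$, whose underlying graded $S$-module is the composition product $\mathcal{C}\circ\Pp$. Because $\Pp$ is Koszul, this complex is acyclic with homology the identity $S$-module $I$, whose generating function is $t$. The differential preserves arity and shifts homological degree by one, so the Euler characteristic of the complex equals that of its homology; combined with the multiplicativity of the signed generating function under the composition product, this gives
\[
\chi_{\mathcal{C}}\bigl(f_\Pp(t)\bigr)=\chi(\mathcal{C}\circ_\kappa\Pp)(t)=\chi(I)(t)=t,
\]
that is, $-f_{\Pp^!}\bigl(-f_\Pp(t)\bigr)=t$. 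Replacing $t$ by $-t$ turns this into $f_{\Pp^!}\bigl(-f_\Pp(-t)\bigr)=t$, which is the claimed identity.

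The main obstacle is the sign bookkeeping: one must check that the homological degree of $\mathcal{C}(n)$ is exactly $n-1$ and that the suspension relating $\mathcal{C}$ to $\Pp^!$ contributes no extra dimension, so that the two substitutions $t\mapsto -t$ combine to put precisely $-f_\Pp(-t)$ inside and $f_{\Pp^!}$ outside. Once the equation is established, the criterion is immediate. Since $f_\Pp(t)=t+O(t^2)$, the series $-f_\Pp(-t)=t+O(t^2)$ has a unique compositional inverse as a formal power series, and the functional equation identifies this inverse with $f_{\Pp^!}$. As $f_{\Pp^!}(t)=\sum_{n\ge1}\tfrac{\dim\Pp^!(n)}{n!}t^n$ has only nonnegative coefficients, any strictly negative coefficient in the compositional reverse of $-f_\Pp(-t)$ is incompatible with the functional equation, hence with $\Pp$ being Koszul. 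This proves the contrapositive, and therefore the final assertion.
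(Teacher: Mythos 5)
Your proof is correct. The paper states this proposition without proof --- it is the classical Ginzburg--Kapranov criterion --- and your derivation via the Euler characteristic of the acyclic Koszul complex $\mathcal{C}\circ_\kappa\Pp$, with the sign bookkeeping coming from the fact that the arity-$n$ part of the Koszul dual cooperad is concentrated in weight $n-1$ (which is where the hypothesis ``generated in arity $2$'' enters), is exactly the standard argument one finds in Loday--Vallette, including the correct reduction of the final criterion to the nonnegativity of the coefficients of $f_{\Pp^!}$.
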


Thus any operad generated in arity $2$ such that their Hilbert series is one of the series given in Table~\ref{fg:Hseries} of the \hyperlink{Appendix}{Appendix} is not Koszul.

The main tool we will use to prove that an operad is Koszul is the following property:

\begin{dfn}
	Let $\Pp$ be an operad; $\Pp$ is \emph{monomial} if it admits a presentation: 
	$$\Pp\simeq\F(x^1, \dots, x^n)/\langle \R \rangle,$$ 
	such that $\R$ is a set of monomials. $\Pp$ is \emph{quadratic monomial} if $\R$ is a set of quadratic monomials.
\end{dfn}

\begin{prp}\label{Proposition:K}
  Let $\Pp$ be a quadratic monomial operad, then $\Pp$ is Koszul.
\end{prp}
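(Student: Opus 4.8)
The plan is to reduce the Koszulness of $\Pp$ to a Gröbner basis computation in the associated shuffle operad. I would rely on two standard facts from the theory of shuffle operads developed by Dotsenko and Khoroshkin: first, a symmetric operad is Koszul if and only if the shuffle operad obtained by forgetting the $\mathbb{S}$-module structure (after fixing an ordering of inputs) is Koszul; and second, a shuffle operad admitting a quadratic Gröbner basis is Koszul. The whole argument then amounts to exhibiting such a Gröbner basis for a monomial operad essentially for free.

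First I would fix a monomial order on shuffle tree monomials compatible with operadic composition, and pass from the symmetric presentation $\Pp \simeq \F(x^1, \dots, x^n)/\langle \R \rangle$ to the associated shuffle operad $\Pp^{f}$. The key observation is that this translation preserves monomiality: each quadratic tree monomial in $\R$ corresponds to a (collection of) quadratic shuffle tree monomial(s), so $\Pp^{f}$ is again presented by quadratic monomial relations. Thus it suffices to prove the statement for quadratic monomial shuffle operads.

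Next I would argue that the defining monomial relations form a Gröbner basis. Since $\R$ consists of tree monomials, the operadic ideal $\langle \R \rangle$ is spanned by exactly those tree monomials that contain some element of $\R$ as a divisor; consequently the tree monomials avoiding every element of $\R$ project to a basis of $\Pp^{f}$, and the leading term of any nonzero element of $\langle \R \rangle$ is divisible by some relation in $\R$. This is precisely the condition for $\R$ to be a Gröbner basis — equivalently, all $S$-polynomials arising from overlaps of two relations reduce to zero, which is automatic in the monomial case. As $\R$ is quadratic by hypothesis, this is a \emph{quadratic} Gröbner basis, and the criterion above gives that $\Pp^{f}$, hence $\Pp$, is Koszul.

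The main obstacle, as I see it, is not the Gröbner computation itself — which is trivial for monomials — but the careful bookkeeping in the symmetric-to-shuffle passage: one must ensure that monomiality is genuinely preserved, choose a monomial order compatible with the shuffle composition, and correctly invoke the equivalence between Koszulness of the symmetric operad and of its shuffle counterpart. An alternative, more self-contained route would be to compute the quadratic dual $\Pp^{!}$ (again monomial) and to show directly that the associated Koszul complex is acyclic by a combinatorial cancellation argument on tree monomials; this avoids citing the shuffle machinery but requires setting up the Koszul complex explicitly, so I would prefer the Gröbner route.
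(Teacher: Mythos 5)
Your argument is correct and is essentially the paper's approach: the paper simply cites \cite[Theorem 6.3.3.2]{AlgComp} and \cite[Theorem 8.3.1]{AlgOp}, whose proofs proceed exactly via the observation you spell out, namely that a quadratic monomial presentation is automatically a quadratic Gr\"obner basis for the associated shuffle operad, which implies Koszulness. The only difference is that you unpack the content of the citation rather than invoking it directly.
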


This fact is a direct consequence of \cite[Theorem 6.3.3.2]{AlgComp} or \cite[Theorem 8.3.1]{AlgOp}. 
However, if one inspects the proof of those theorems, the above proposition is, in fact, one of the first intermediate results of those proofs.

This proposition already allows us to prove the Koszulness of some natural to consider operads $\Mag$, $\CMag$ and $\AMag$ the free operads over respectively one generator of arity $2$ without symmetries, one commutative generator of arity $2$ and one antisymmetric generator of arity $2$, and their Koszul dual $\Nil$, $\ANil$ and $\CNil$.

\section{Koszul symmetric operads generated by one element of arity $2$}

The conjecture expected in \cite{KP} is the following:
\begin{cnj}
  Let $\Pp$ a finitely generated Koszul symmetric operad, then the Hilbert series of $\Pp$ is differential algebraic over $\mathbb{Z}[t]$. 
\end{cnj}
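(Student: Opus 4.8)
The plan is to turn the analytic question about $f_{\Pp}$ into a combinatorial one by exploiting Koszul duality. For a finitely generated Koszul operad $\Pp$, the Koszul complex is acyclic, so its Euler characteristic vanishes; translating this vanishing into generating functions produces a functional equation relating $f_{\Pp}$ to the (co)generating function of the Koszul dual $\Pp^!$. In the case most relevant here, where $\Pp$ is generated in arity $2$, this is precisely the Ginzburg--Kapranov relation $f_{\Pp^!}(-f_\Pp(-t))=t$ recalled above, which says that $-f_\Pp(-t)$ and $f_{\Pp^!}$ are mutually inverse under ordinary composition. Since differential algebraicity over $\mathbb{Z}[t]$ is preserved by composition with a polynomial (here $t\mapsto -t$) and, crucially, by compositional inversion --- one differentiates $f_{\Pp^!}(g(t))=t$ and eliminates the derivatives of $f_{\Pp^!}$, using that the field generated by $g,g',g'',\dots$ over $\mathbb{Q}(t)$ has finite transcendence degree --- it would suffice to establish the conjecture for just one member of each Koszul dual pair. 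So the first reduction is: prove D-algebraicity for one operad in each dual pair. For generators of higher arity the same vanishing holds, but the functional equation now lives at the level of cycle index series and plethysm rather than ordinary composition, which already complicates this step.

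Second, I would try to reduce to the monomial case. Proposition~\ref{Proposition:K} shows that quadratic monomial operads are Koszul, and for such operads $\dim\Pp(n)$ counts tree-monomials avoiding a fixed finite set of quadratic divisors. Pattern-avoidance counts of this kind are recognised by a finite tree automaton, so the corresponding generating functions satisfy a finite polynomial system that can be differentiated and then resultant-eliminated into a single polynomial ODE; this is the mechanism underlying the results of \cite{KP} and yields D-algebraicity outright in the monomial case. The idea would then be to pass from a general Koszul $\Pp$ to its associated leading-term (monomial) operad via a quadratic Gr\"obner basis, for which the Hilbert series is unchanged, and thereby inherit D-algebraicity.

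The main obstacle is exactly this last passage: the Koszul property is homological and does not, by itself, provide a finite Gr\"obner basis or a monomial model. There are Koszul operads whose reduced Gr\"obner basis is infinite, so the clean chain ``Koszul $\Rightarrow$ finite combinatorial presentation $\Rightarrow$ D-algebraic'' is simply unavailable in general. Nor can one extract D-algebraicity from the duality equation alone: a compositional-inverse relation constrains the pair $(f_\Pp,f_{\Pp^!})$ but pins down neither of them as D-algebraic without an independent structural input, and in the higher-arity case it is not even clear that plethystic inversion preserves D-algebraicity. The crux is therefore to distil enough finiteness from the Koszul condition to run the automaton-and-elimination argument, or else to find a genuinely homological proof that D-algebraicity of the Hilbert series is a Koszul invariant; this is where the difficulty concentrates and why the conjecture is open in full generality. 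The present paper avoids the obstacle for its subclass by first classifying the relevant binary set-operads and then verifying D-algebraicity --- in fact of order $1$ --- on the resulting finite list.
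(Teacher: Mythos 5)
The statement you were asked to prove is not a theorem of the paper: it is the open conjecture of Khoroshkin and Piontkovski (\cite{KP}), which the paper merely records and motivates. The paper contains no proof of it, and neither does your proposal --- as you yourself say explicitly in your final paragraph. So there is nothing to compare proofs against; the honest verdict is that your text is a (largely accurate) survey of why the conjecture is plausible and why it is hard, not an argument that establishes it. The two reductions you describe are individually sound: D-algebraicity over $\mathbb{Q}(t)$ is indeed preserved under compositional inversion, so the Ginzburg--Kapranov relation $f_{\Pp^!}(-f_\Pp(-t))=t$ does let you work with one member of each Koszul dual pair (at least in the binary-generated case); and for operads admitting a finite quadratic Gr\"obner basis the Khoroshkin--Piontkovski machinery does yield differential algebraicity. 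But neither reduction reaches the hypothesis actually given, which is only Koszulness.

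The gap you identify is the right one, and it is worth being precise about why it is fatal rather than merely inconvenient: Koszulness is a homological condition, strictly weaker than the existence of a quadratic Gr\"obner basis (PBW property), and there is no known mechanism for extracting any finite combinatorial model of $\Pp$ from acyclicity of the Koszul complex alone. The duality equation constrains the pair $(f_\Pp, f_{\Pp^!})$ symmetrically and so cannot by itself force either series to be D-algebraic. This is exactly where the paper stops as well: rather than attacking the general conjecture, it restricts to set-operads generated by one binary element, classifies the finitely many quadratic candidates outright (Proposition~\ref{prp:enum}), discards the non-Koszul ones by the Ginzburg--Kapranov criterion, proves Koszulness of the survivors via Proposition~\ref{Proposition:K} (the quadratic monomial criterion, applied after polarization), and then verifies the order-$1$ differential algebraic identity case by case in Theorem~\ref{thm:main}. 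Your closing sentence describes this strategy correctly. If you want to contribute something beyond a status report, the realistic targets are the special cases where your second reduction actually applies --- e.g.\ proving the conjecture for all symmetric operads with a finite quadratic Gr\"obner basis, or for the monomial quadratic case in full generality --- rather than the conjecture as stated.
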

It states that the Hilbert series of any finitely generated Koszul symmetric operad should satisfy a non-trivial differential algebraic equation over $\mathbb{Z}[t]$. 
It can be experimentally checked on examples appearing in the literature; however, most of the examples are operads generated by elements of arity $2$ and are often generated by one element. 
We have already introduced the Koszul operads $\Mag$, $\CMag$, $\AMag$, $\Nil$, $\ANil$ and $\CNil$ which are all generated by one element of arity $2$. 
In order to be as exhaustive as possible, we looked through \operadia  \cite{Operadia} which is an under-construction database of operads and their properties.
One can find $17$ other Koszul operads generated by one element of arity $2$ in \cite{Operadia}; a summary table of those operads and their Hilbert series is given in Table \ref{fg:table} of the \hyperlink{Appendix}{Appendix}, and the sequence of arity-wise dimensions of the operads are given in the OEIS; they might be shifted by $1$ or have a non-zero first term.

One may check that all their Hilbert series satisfy differential algebraic identities. 
Moreover, those identities are of \emph{order $1$}, meaning that only $f$ and $f'$ appear and no higher differential of $f$. 
It leads us to the following conjecture:
\begin{cnj}\label{cnj:main}
  Let $\Pp$ a Koszul symmetric operad generated by one element of arity $2$, then the Hilbert series of $\Pp$ is differential algebraic of order $1$ over $\mathbb{Z}[t]$. Equivalently, $f_\Pp$ and $f_\Pp'$ are algebraically dependent over $\mathbb{Z}[t]$.
\end{cnj}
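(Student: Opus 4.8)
The plan is to isolate what, combinatorially, forces the Hilbert series of a one--binary--generated operad to be first--order differential algebraic, and to feed the Koszul hypothesis in only through the Ginzburg--Kapranov functional equation of the Proposition above. Write $f = f_\Pp$ and set $g(t) := -f(-t)$; that equation reads $f_{\Pp^!}(g(t)) = t$, so $f_{\Pp^!}$ is the compositional inverse of $g$. Since a single binary generator spans the regular, trivial, or sign $S_2$--representation and Koszul duality sends $\Pp(2)$ to $\Pp(2)^\ast \otimes \mathrm{sgn}$, the dual $\Pp^!$ is again generated by one element of arity $2$ (for instance $\Com^! = \Lie$ and $\Mag^! = \Nil$). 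Thus the conjectural class is closed under Koszul duality, and the first task is to show that order--one differential algebraicity is itself closed under this duality.

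First I would prove an \emph{Inversion Lemma}: if $h$ and $h'$ are algebraically dependent over $\mathbb{Q}(t)$ and $g$ is the compositional inverse of $h$, then $g$ and $g'$ are algebraically dependent over $\mathbb{Q}(t)$. The proof is a one--line chain--rule computation: from $h(g(t)) = t$ we get $h'(g(t)) = 1/g'(t)$, so a nonzero relation $P(s, h(s), h'(s)) = 0$ specialises at $s = g(t)$ to $P(g(t), t, 1/g'(t)) = 0$, and clearing the resulting denominator yields a nonzero polynomial relation among $t$, $g(t)$ and $g'(t)$. The sign twist $f \mapsto -f(-t)$ obviously preserves the property as well. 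Combined with the functional equation, this shows that Conjecture \ref{cnj:main} holds for $\Pp$ if and only if it holds for $\Pp^!$, so that within each Koszul--dual pair one is free to work with whichever operad is more tractable.

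The second step is to pin down the expected shape of the relation. For the three free operads the species decomposition gives a quadratic functional equation $f = t + c\,f^2$, with $c = 1$ for $\Mag$ and $c = \tfrac12$ for $\CMag$ and $\AMag$; differentiating yields $f' = 1/(1 - 2c f)$, which is not merely a first--order differential--algebraic relation but a first--order \emph{rational} differential equation $f' = R(t,f)$. The same strengthening is visible in every example we can compute: $f' = 1 + f$ for $\Com$, $f' = (1+f)^2$ for $\Ass$, $(1-t)f' = 1$ for $\Lie$, and $t f' = (1+t) f$ for $\Perm$. I would therefore aim at the sharper statement that every Koszul one--binary operad satisfies $f' = R(t,f)$ for some $R \in \mathbb{Q}(t,f)$. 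This is the natural output of a \emph{pointing} argument: marking one leaf of a normal--form tree and reading off the root--to--leaf path decomposes the structure as a sequence of hanging subtrees, expressing $f'$ through $f$.

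To make pointing rigorous past the free case I would seek a generating--function avatar of Koszulness. When $\Pp$ admits a quadratic Gröbner basis (a PBW presentation), such an equation comes for free: $f$ counts the normal--form trees avoiding the leading quadratic monomials, and the root--to--leaf decomposition gives $f' = R(t,f)$ directly. The genuine difficulty --- and the reason this remains a conjecture --- is that Koszulness does \emph{not} imply a PBW presentation, so for Koszul--but--not--PBW operads no monomial tree basis is available and the pointing argument has no direct analogue; the only general handle left is then the inversion relation to $f_{\Pp^!}$. Worse, the continuous families of quadratic operads classified by Bremner and Dotsenko \cite{Assdep} show that quadraticity alone is hopelessly insufficient, so the acyclicity of the Koszul complex must be exploited quantitatively. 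Turning this homological input into a local, path--wise constraint on normal--form trees is the crux I expect to be the main obstacle --- and it is exactly what the set--operad hypothesis of Theorem \ref{thm:main} circumvents, by replacing the homological condition with an explicit finite classification for which the order--one relation can be verified operad by operad.
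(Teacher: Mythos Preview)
The statement you are addressing is labeled \emph{Conjecture} in the paper, and the paper does not prove it. What the paper does prove is the much weaker Corollary that follows Theorem~\ref{thm:main}: the conjecture holds for Koszul \emph{set}-operads generated by one binary operation. That corollary is obtained not by any structural argument of the kind you sketch, but by brute classification: Theorem~\ref{thm:main} lists all eleven such operads, their Hilbert series are written down explicitly, and each one is checked by inspection to satisfy a first-order differential-algebraic relation. There is no ``paper's own proof'' of Conjecture~\ref{cnj:main} to compare against.

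Your proposal is not a proof either, and you say so yourself. The Inversion Lemma is correct and is a genuine contribution: it shows the conjectural class is closed under Koszul duality, which is a sanity check the paper does not make explicit. The pointing heuristic for the free and PBW cases is also sound. But the passage from ``Koszul'' to ``admits a quadratic Gr\"obner basis'' is exactly the gap you name, and nothing in your outline closes it; the final paragraph is an honest acknowledgment that the argument stalls there. So what you have written is a strategy memo with a correctly located obstruction, not a proof attempt that can be graded against the paper --- the paper simply sidesteps the obstruction by restricting to a finite, enumerable subclass.
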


\begin{rmk}
  One cannot expect this conjecture to be true for Koszul symmetric operads generated by finitely many elements of arity $2$ since one can define the operad $\Com\circ\Com$ by generators and relations:
  $$ \Com\circ\Com=\F(c_1,c_2)/\langle c_1\circ_1 c_1 - c_1\circ_2 c_1,\; c_2\circ_1 c_2 - c_2\circ_2 c_2,\; c_1\circ c_2\rangle $$
  where the action of $\mathfrak{S}_2$ on $\{c_1,c_2\}$ is given by $c_i.(1\;2)=c_i$. This operad is Koszul by the theory of distributed law \cite{distr}; moreover, its Hilbert series is $\exp(\exp(t)-1)-1$, which is not algebraically dependent with its differential over $\mathbb{Z}[t]$.\\
  More generally this construction allow us to build a Koszul operad $\Pp$ with $n$ generators such that its Hilbert series is not algebraically dependent with its first $n-1$ differentials over $\mathbb{Z}[t]$.
\end{rmk}

\section{The magmatic Operad}

As stated in the abstract, a goal of this paper is to classify set-operads generated by one element of arity $2$ that are Koszul. The case with a symmetric generator will not be developed here since the only quadratic set-operads generated by one symmetric element of arity $2$ are $\Com$ and $\CMag$, and are both known to be Koszul.

The following lemma allows us to reduce the study of Koszul set-operads generated by one element of arity $2$ to the study of a finite number of operads.

\begin{lem}\label{Lemma:KS}
  Let $\Pp$ be a Koszul set-operad generated by one element of arity $2$, then $\Pp$ is a quotient of $\Mag$ by an equivariant equivalence relation on the monomials of $\Mag(3)$.
\end{lem}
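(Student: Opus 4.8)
The plan is to realise $\Pp$ as an explicit quotient of $\Mag$ and then to show, using only the quadraticity that Koszulness forces, that the defining congruence is controlled entirely by what happens in arity $3$. First I would use the generation hypothesis. Since the case of a symmetric generator only produces $\Com$ and $\CMag$ and has already been set aside, I may assume the generator $\mu \in \Pp(2)$ is not fixed by the transposition, so that the free symmetric set-operad on $\mu$ is exactly $\Mag$ (planar binary trees with labelled leaves). The universal property of the free operad then yields a surjection of symmetric set-operads $\pi : \Mag \twoheadrightarrow \Pp$, and hence $\Pp \cong \Mag/\!\approx$ for a unique operadic congruence $\approx$. Because $\approx$ is compatible with the symmetric-group actions, its restriction $\approx_3$ to the $12$ monomials of $\Mag(3)$ is an equivariant equivalence relation; this is the candidate relation in the statement.

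Next I would extract the one structural input needed from the Koszul hypothesis. By the quoted fact that Koszul operads are quadratic, the linearisation $\mathbb{K}\Pp$ is a quadratic operad, i.e. its ideal of relations is generated by its arity-$3$ component $R \subseteq \mathbb{K}\Mag(3)$. The crucial set-operad feature is that $\mathbb{K}\Pp(n)$ has $\Pp(n)$ as a basis, so the projection $\mathbb{K}\Mag(n) \to \mathbb{K}\Pp(n)$ carries monomials to basis elements and its kernel is spanned by the differences $m - m'$ with $m \approx m'$. In particular $R = \mathrm{span}\{\,m - m' : m \approx_3 m'\,\}$ is exactly the linear shadow of $\approx_3$.

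Let $\sim$ denote the operadic congruence on $\Mag$ generated by $\approx_3$. Clearly $\sim \,\subseteq\, \approx$, and since operadic composition only raises arity, generating $\sim$ from an arity-$3$ relation produces no new identifications in arity $3$; together with equivariance and transitivity of $\approx_3$ this gives $\sim_3 = \approx_3$. I would then compare linearisations: on one hand $\mathbb{K}[\Mag/\!\sim] = \mathbb{K}\Mag/(R)$ because $\sim$ is generated in arity $3$ with linear shadow $R$; on the other hand $\mathbb{K}\Pp = \mathbb{K}\Mag/(R)$ by quadraticity. Hence the surjection $\Mag/\!\sim \,\twoheadrightarrow\, \Pp$ linearises to an isomorphism, which forces it to be a bijection in every arity. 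Therefore $\approx \,=\, \sim$ is generated by $\approx_3$, and $\Pp$ is the quotient of $\Mag$ by the equivariant equivalence relation $\approx_3$ on $\Mag(3)$, as claimed.

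The step I expect to be most delicate is the passage between the set-operad congruence and the linear ideal: one must argue carefully that the relation space $R$ is spanned by monomial differences — this is where being a set-operad, and not merely a linear one, is used — and then descend from the resulting linear isomorphism back to a genuine bijection of the underlying sets. Everything else, namely the surjection from $\Mag$, the equivariance of $\approx_3$, and the arity-raising observation that $\sim_3 = \approx_3$, is formal.
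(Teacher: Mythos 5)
Your proof is correct and follows the same route as the paper: realise $\Pp$ as a quotient of the free operad $\Mag$, invoke quadraticity (forced by Koszulness) to reduce the defining relations to arity $3$, and use set-theoreticity to see that these relations form an equivariant equivalence relation on the monomials of $\Mag(3)$. The paper's own proof is exactly this argument compressed into three lines, and your elaboration of the linearisation step (the relation space being spanned by monomial differences, and the congruence being generated in arity $3$) is a faithful filling-in of the details it leaves implicit.
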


\begin{proof}
  Since $\Pp$ is generated by one element of arity $2$, it is a quotient of $\Mag$, moreover since $\Pp$ is Koszul, $\Pp$ is quadratic, hence the relations are quadratic. Moreover, since $\Pp$ is set-theoretical, the relations are given by an equivariant equivalence relation on the monomials.
\end{proof}

Some Koszul set-operads generated by one element of arity $2$ are well known:
\begin{itemize}
  \item The magmatic operad on one generator without symmetries, 
  \item The non-associative permutative operad, 
  \item The associative operad, 
  \item The permutative operad, 
  \item The Koszul dual of the Lie admissible operad (which is indeed a set-operad when 
  considering the non-symmetric generator).
\end{itemize}

We will add four new operads to this list. 
Namely, the connected sum of $\CMag$ and $\AMag$, the connected sum of $\CMag$ and $\ANil$, and two other operads built over $\CMag$ and $\ANil$. 
One may notice that neither $\AMag$, $\CNil$, nor $\ANil$ are set-operads; however, they are the building blocks of the $4$ new Koszul set-operads we will construct.

Let $\Mag$ be the magmatic operad on one generator without symmetries $x$, and let $y=x.(1\;2)$. 
The operad $\Mag$ admits a non-trivial automorphism given by $x\mapsto y$, let us call it the \emph{reverse automorphism}. 
The operad $\Mag$ is Koszul and is in fact the first example of Koszul set-operad generated by one element of arity $2$. 
By Lemma \ref{Lemma:KS}, we are left to consider quotients of $\Mag$ by an equivariant equivalence relation on the monomials of $\Mag(3)$. 
Let us study the action of $\mathfrak{S}_3$ on the set of monomials of $\Mag(3)$. 
They can be represented by all possible ways to parenthesize the product of three elements, $a$, $b$, and $c$, with $a$, $b$, and $c$ in any order. 
Let us represent it this way:\\

\begin{center}
  %\centering
    \begin{tikzpicture}
      \node (L1) at (-4, 0) {\scalebox{1}{$a(bc)$}};
      \node (L2) at (-2.268, -1) {\scalebox{1}{$b(ac)$}};
      \node (L3) at (-2.268, -3) {\scalebox{1}{$b(ca)$}};
      \node (L6) at (-5.732, -1) {\scalebox{1}{$a(cb)$}};
      \node (L5) at (-5.732, -3) {\scalebox{1}{$c(ab)$}};
      \node (L4) at (-4, -4) {\scalebox{1}{$c(ba)$}};
      \node (R1) at (4, 0) {\scalebox{1}{$(ab)c$}};
      \node (R2) at (5.732, -1) {\scalebox{1}{$(ac)b$}};
      \node (R3) at (5.732, -3) {\scalebox{1}{$(ca)b$}};
      \node (R6) at (2.268, -1) {\scalebox{1}{$(ba)c$}};
      \node (R5) at (2.268, -3) {\scalebox{1}{$(bc)a$}};
      \node (R4) at (4, -4) {\scalebox{1}{$(cb)a$}};
      \draw[red] (R1) -- (R2);
      \draw[red] (R3) -- (R6);
      \draw[red] (R4) -- (R5);
      \draw (R1) -- (R6);
      \draw (R2) -- (R5);
      \draw (R3) -- (R4);
      \draw[blue] (R1) -- (R4);
      \draw[blue] (R2) -- (R3);
      \draw[blue] (R5) -- (R6);
      \draw[green] (R1) -- (R3) -- (R5) -- (R1);
      \draw[green] (R2) -- (R4) -- (R6) -- (R2);
      \draw (L1) -- (L2);
      \draw (L3) -- (L6);
      \draw (L4) -- (L5);
      \draw[red] (L1) -- (L6);
      \draw[red] (L2) -- (L5);
      \draw[red] (L3) -- (L4);
      \draw[blue] (L1) -- (L4);
      \draw[blue] (L2) -- (L3);
      \draw[blue] (L5) -- (L6);
      \draw[green] (L1) -- (L3) -- (L5) -- (L1);
      \draw[green] (L2) -- (L4) -- (L6) -- (L2);
    \end{tikzpicture}
\end{center}
\textcolor{black}{Black} edges represent the action of the transposition $(1\;2)$, \textcolor{red}{red} edges represent the action of the transposition $(2\;3)$, \textcolor{blue}{blue} edges represent the action of the transposition $(1\;3)$, and \textcolor{green}{green} edges represent the action of the $3$-cycles.

As we can see, the action of $\mathfrak{S}_3$ on the monomials of $\Mag(3)$ has $2$ orbits; let us call them the left and right orbits. They are exchanged by the reverse automorphism of $\Mag$.

\begin{prp}\label{Proposition:cases}
  Let $\R$ an equivariant equivalence relation on the monomials of $\Mag(3)$, then $\R$ satisfies exactly one of the following property:
  \begin{enumerate}
    \item all equivalence classes are either subsets of the left orbit or subsets of the right orbit;\label{c:1}
    \item all equivalence classes contain elements of both orbits.\label{c:2}
  \end{enumerate}
\end{prp}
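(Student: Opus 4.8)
The plan is to exploit the strong homogeneity of the $\mathfrak{S}_3$-action on each orbit. The first step is to observe that each of the left and right orbits consists of exactly $6$ monomials, while $|\mathfrak{S}_3| = 6$; since each orbit is, by construction, a single $\mathfrak{S}_3$-orbit, the orbit--stabilizer theorem forces every stabilizer to be trivial. Hence the action of $\mathfrak{S}_3$ on each orbit is \emph{simply transitive}: for any two monomials lying in the same orbit there is a (unique) $\sigma \in \mathfrak{S}_3$ carrying the first to the second.

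With this in hand I would argue by a clean dichotomy. Either no equivalence class of $\R$ contains monomials from both orbits, in which case every class is contained in a single orbit and we are in property~\ref{c:1}; or there exists at least one \emph{crossing} relation $\ell \sim r$ with $\ell$ in the left orbit and $r$ in the right orbit. In the latter case I claim property~\ref{c:2} holds. Indeed, equivariance of $\R$ gives $\ell\sigma \sim r\sigma$ for every $\sigma \in \mathfrak{S}_3$. By simple transitivity on the left orbit, as $\sigma$ ranges over $\mathfrak{S}_3$ the monomial $\ell\sigma$ ranges over \emph{every} element of the left orbit; symmetrically $r\sigma$ ranges over every element of the right orbit. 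Therefore each left monomial is equivalent to some right monomial and each right monomial is equivalent to some left monomial, so every equivalence class meets both orbits.

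It then remains to check that the two properties are mutually exclusive and exhaustive. Exhaustiveness is immediate from the dichotomy above, since a crossing relation either exists or it does not. They are mutually exclusive because the set of $12$ monomials is nonempty: under property~\ref{c:1} at least one class is contained in a single orbit, which directly contradicts the requirement of property~\ref{c:2} that every class meet both orbits.

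I expect the only real content to be the simple-transitivity observation of the first step; once freeness of the action on each orbit is established, the propagation of a single crossing relation to all $12$ monomials is \emph{forced} by equivariance, and no genuine obstacle remains. Note in particular that the argument uses only the equivariance of $\R$ together with the group action, and never appeals to transitivity of the equivalence relation itself.
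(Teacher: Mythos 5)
Your proof is correct: the key point is indeed that $\mathfrak{S}_3$ acts simply transitively on each of the two size-$6$ orbits, so a single crossing relation propagates by equivariance to pair every left monomial with a right one, forcing every class to meet both orbits. The paper states this proposition without proof, treating it as evident from the orbit picture; your argument is precisely the reasoning it implicitly relies on, and it is complete.
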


In the first case, the relation is entirely determined by the class of $(ab)c$ and the class of $a(bc)$. In the second case, the class of $(ab)c$ is enough to determine the relation.

The first case can be refined into four subcases:
\begin{prp}\label{Proposition:subcases1}
  Let $\R$ an equivariant equivalence relation on the monomials of $\Mag(3)$ satisfying Property \ref{c:1} of Proposition \ref{Proposition:cases}, then $\R$ satisfies exactly one of the following property:
  \begin{enumerate}[label*=1.\arabic*]
    \item the relation is trivial (equivalence classes are singletons);\label{sc:triv}
    \item equivalence classes of the left orbit are reduced to singletons, and equivalence classes of the right are not;\label{sc:sr}
    \item equivalence classes of the right orbit are reduced to singletons, and equivalence classes of the left are not;\label{sc:sl}
    \item no equivalence class is reduced to a singleton.\label{sc:lr}
  \end{enumerate}
\end{prp}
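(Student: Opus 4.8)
The plan is to reduce the statement to a simple dichotomy about equivariant equivalence relations on a single orbit and then enumerate the resulting combinations. First I would record what Property~\ref{c:1} buys us: since no equivalence class of $\R$ meets both orbits, the relation $\R$ is the disjoint union of its restriction $\R_L$ to the left orbit and its restriction $\R_R$ to the right orbit, and each of $\R_L,\R_R$ is again an equivariant equivalence relation (equivariance is inherited, because each orbit is $\mathfrak{S}_3$-stable and the action permutes classes within a fixed orbit). Thus $\R$ is completely encoded by the pair $(\R_L,\R_R)$, and it suffices to understand the possible shapes of an equivariant equivalence relation on one orbit.

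The key step is the following dichotomy: on a transitive $\mathfrak{S}_3$-set $X$ (such as either orbit of $\Mag(3)$), an equivariant equivalence relation is either the discrete relation, in which every class is a singleton, or else has no singleton class at all. To prove it, suppose some $x\in X$ satisfies $[x]=\{x\}$. For any $y\in X$, transitivity furnishes $\sigma\in\mathfrak{S}_3$ with $\sigma\cdot x=y$, and equivariance then gives $[y]=[\sigma\cdot x]=\sigma\cdot[x]=\sigma\cdot\{x\}=\{\sigma\cdot x\}=\{y\}$. Hence a single singleton class forces every class to be a singleton; contrapositively, if one class has size at least two then no class is a singleton. (In fact each orbit has $6$ elements and $\mathfrak{S}_3$ acts freely on it, so $\R_L$ and $\R_R$ correspond to subgroups of $\mathfrak{S}_3$ with classes of uniform size; but only the dichotomy above is needed here.)

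Applying the dichotomy to both orbits, each of $\R_L$ and $\R_R$ is independently either discrete or singleton-free, yielding exactly $2\times 2=4$ mutually exclusive and jointly exhaustive cases, which I would then match to the listed properties: both discrete is~\ref{sc:triv}; $\R_L$ discrete with $\R_R$ singleton-free is~\ref{sc:sr}; $\R_L$ singleton-free with $\R_R$ discrete is~\ref{sc:sl}; and both singleton-free is~\ref{sc:lr}, the case in which no equivalence class whatsoever is reduced to a singleton. Since the proposition becomes a direct enumeration once the dichotomy is in hand, I do not expect any serious obstacle; the only genuine content is the transitivity-plus-equivariance computation establishing the dichotomy, and the remaining work is the bookkeeping that verifies the four combinations are precisely the four stated subcases.
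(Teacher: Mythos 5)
Your argument is correct: the decomposition of $\R$ into equivariant relations $\R_L$ and $\R_R$ on the two $\mathfrak{S}_3$-stable orbits, plus the observation that on a transitive $\mathfrak{S}_3$-set an equivariant equivalence relation with one singleton class must be discrete (via $[\sigma\cdot x]=\sigma\cdot[x]$), cleanly yields the four mutually exclusive, exhaustive cases. The paper states this proposition without proof, treating it as evident; your write-up supplies exactly the implicit justification, so there is nothing to object to.
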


The first subcase gives rise to the operad $\Mag$. The second and third subcases are equivalent by the reverse automorphism. The last subcase is the same as giving one equivalence relation on the left orbit and another one on the right orbit.

The second case can be refined into two subcases:
\begin{prp}\label{Proposition:subcases2}
  Let $\R$ an equivariant equivalence relation on the monomials of $\Mag(3)$ satisfying Property \ref{c:2} of Proposition \ref{Proposition:cases}, then $\R$ satisfies exactly one of the following property:
  \begin{enumerate}[label*=2.\arabic*]
    \item all equivalence classes contain exactly two elements;\label{sc:simple}
    \item all equivalence classes contain strictly more than two elements (namely $4$, $6$, or $12$ elements). \label{sc:comp}
  \end{enumerate}
\end{prp}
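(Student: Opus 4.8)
The plan is to exploit a feature of the two-orbit picture established just before the proposition: each orbit of $\Mag(3)$ has exactly $6=|\mathfrak{S}_3|$ elements, so the stabilizer of any monomial is trivial and $\mathfrak{S}_3$ acts \emph{regularly} (simply transitively) on the left orbit $L$ and on the right orbit $R$. First I would record the standard consequence that drives the whole argument: any $\mathfrak{S}_3$-invariant partition of a regular $\mathfrak{S}_3$-set has all of its blocks of one common size, and that size divides $6$.

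Next I would decompose each equivalence class $C$ as $(C\cap L)\sqcup(C\cap R)$. Since $\R$ is equivariant and each of $L,R$ is a single $\mathfrak{S}_3$-orbit (hence $\sigma R=R$ and $\sigma L=L$ for all $\sigma$), we have $\sigma(C\cap R)=(\sigma C)\cap R$, and $\sigma C$ is again a class. Therefore the nonempty intersections $\{C\cap R\}$ form a block system for the regular action on $R$; by the previous step they all share a common size $r\mid 6$. Symmetrically the nonempty intersections $\{C\cap L\}$ form a block system on $L$ with common size $\ell\mid 6$. By Property~\ref{c:2} every class meets both orbits, so $C\cap L$ and $C\cap R$ are nonempty for every class, and each class has exactly $\ell$ elements in $L$ and $r$ elements in $R$.

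The key step is to deduce $\ell=r$. I would count the classes in two ways: the sets $C\cap R$ are in bijection with the classes (each class meets $R$ in exactly one block), so the number of classes equals $|R|/r=6/r$; the identical count on the left gives $6/\ell$. Hence $r=\ell=:k$ with $k\mid 6$, and every class has the same cardinality $\ell+r=2k\in\{2,4,6,12\}$. The dichotomy of the statement then follows at once: $k=1$ yields Property~\ref{sc:simple} (all classes of size exactly two), while $k\in\{2,3,6\}$ yields Property~\ref{sc:comp} (all classes of size $4$, $6$, or $12$), and these alternatives are mutually exclusive and exhaustive.

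The main obstacle I anticipate is precisely the equality $\ell=r$: a priori the left and right intersections of a class could have different sizes, and what rules this out is a global counting constraint (each class is counted once on each side) rather than anything visible class-by-class. Everything else is bookkeeping once regularity of the action and the block-system principle are in hand; the one point to verify carefully is that $L$ and $R$ are each a \emph{single} orbit, so that $\sigma L=L$ and $\sigma R=R$, which is exactly the two-orbit description obtained from the diagram preceding the proposition.
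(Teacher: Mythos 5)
Your proof is correct. The paper states this proposition without proof, asserting only later that ``because the relations are equivariant, the same number of elements of the left and right orbits must appear in each class, either $2$, $3$, or $6$''; your block-system argument on the two regular $\mathfrak{S}_3$-orbits, together with the double count $6/\ell = \#\{\text{classes}\} = 6/r$, is a complete and rigorous justification of exactly that assertion and of the stated dichotomy.
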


We will study each subcase of Proposition \ref{Proposition:subcases1} and \ref{Proposition:subcases2} in the following section. 
This approach leads us to consider $57$ operads.

\section{Construction of all the quadratic set-operads generated by one element of arity $2$}

Let us first study Subcase \ref{sc:sr}. By equivariance, a relation of Subcase \ref{sc:sr} is entirely determined by the class of $(ab)c$. 
Moreover, the equivalence class of $(ab)c$ contains either $2$, $3$, or $6$ elements. 
We have a priori $5$ possibilities to relate two elements of the right orbit:
\begin{itemize}
  \item $(ab)c \sim (ac)b$;
  \item $(ab)c \sim (ba)c$;
  \item $(ab)c \sim (bc)a$;
  \item $(ab)c \sim (ca)b$;
  \item $(ab)c \sim (cb)a$.
\end{itemize}

However, since the relation is equivariant, the relation generated by $(ab)c \sim (ca)b$ is the same as the relation generated by $(ab)c \sim (bc)a$. Moreover, this relation is the only one such that the equivalence class of $(ab)c$ contains $3$ elements.
We have only one possibility for a relation such that the equivalence class of $(ab)c$ contains $6$ elements:
$$(ab)c \sim (cb)a \sim (ca)b \sim (ba)c \sim (ac)b \sim (bc)a$$
Hence we have $5$ possible relations, which give rise to $5$ operads:
\begin{itemize}
  \item $\RR_1=\{(ab)c = (ac)b\}$, 
  \item $\RR_2=\{(ab)c = (ba)c\}$, 
  \item $\RR_3=\{(ab)c = (cb)a\}$, 
  \item $\RR_4=\{(ab)c = (bc)a = (ca)b\}$, 
  \item $\RR_5=\{(ab)c = (cb)a = (ca)b = (ba)c = (ac)b = (bc)a\}$, 
\end{itemize}
Let $\Pp_{i}=\Mag/\langle \RR_i\rangle$

Let us study Subcase \ref{sc:lr}. We need to study the compatibility between the relations on the left and right orbits. We have $5$ relations involving only the left orbit given by the reverse automorphism applied to the relation of the left orbit:
\begin{itemize}
  \item $\LL_1=\{a(bc) = b(ac)\}$, 
  \item $\LL_2=\{a(bc) = a(cb)\}$, 
  \item $\LL_3=\{a(bc) = c(ba)\}$, 
  \item $\LL_4=\{a(bc) = b(ca) = c(ab)\}$, 
  \item $\LL_5=\{a(bc) = b(ca) = c(ab) = a(cb) = b(ac) = c(ba)\}$, 
\end{itemize}

We have $25$ possibilities to combine the relations $\RR_i$ and $\LL_j$; however, by the reverse automorphism, we only need to study those with $i \leq j$, and thus we have $15$ possibilities. Let $\Pp_{i;j}=\Mag/\langle \RR_i;\LL_j\rangle$.

Let us study equivalence relations involving exactly one term of the left and one of right orbits. As previously, knowing the equivalence class of $(ab)c$ is enough to determine the relation. Thus, we have $6$ possibilities:
\begin{itemize}
  \item $\RL_6 = \{(ab)c = a(bc)\}$, 
  \item $\RL_7 = \{(ab)c = a(cb)\}$, 
  \item $\RL_8 = \{(ab)c = b(ac)\}$, 
  \item $\RL_9 = \{(ab)c = b(ca)\}$, 
  \item $\RL_{10} = \{ (ab)c = c(ab)\}$, 
  \item $\RL_{11} = \{ (ab)c = c(ba)\}$.
\end{itemize}
Moreover, none of those relations are equivalent. We get the $6$ operads $\Pp_6$ up to $\Pp_{11}$ with $\Pp_{i}= \Mag/\langle \RL_i\rangle$.

Let us finally study Subcase \ref{sc:comp}. Because the relations are equivariant, the same number of elements of the left and right orbits must appear in each class of the equivalence relation, either $2$, $3$, or $6$. 
Any relation of Subcase~\ref{sc:comp} is a combination of a relation of Subcase \ref{sc:simple} and a relation of Subcase \ref{sc:sr}. 
Let us define the operads $\Pp_{i;j}= \Mag/\langle \RR_i;\RL_j\rangle$.

\begin{prp}\label{prp:enum}
  Let $\Pp$ be a quadratic set-operad generated by one element of arity $2$, then $\Pp$ is isomorphic to the operad $\Mag$, one of the $11$ operads $\Pp_{i}$, or one of the $45$ operads $\Pp_{i;j}$.
\end{prp}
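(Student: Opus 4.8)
The plan is to obtain Proposition~\ref{prp:enum} as a bookkeeping consequence of the structural results above, organized along the case analysis of Propositions~\ref{Proposition:cases}, \ref{Proposition:subcases1} and~\ref{Proposition:subcases2}. First I would run the argument of Lemma~\ref{Lemma:KS} --- which uses only that $\Pp$ is quadratic and set-theoretical, and not that it is Koszul --- to write $\Pp\simeq\Mag/\langle\R\rangle$ for some equivariant equivalence relation $\R$ on the monomials of $\Mag(3)$. This reduces the statement to enumerating such relations $\R$ up to isomorphism, and by Proposition~\ref{Proposition:cases} every $\R$ either has all classes inside a single orbit (Property~\ref{c:1}) or all classes meeting both orbits (Property~\ref{c:2}).

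For Property~\ref{c:1} I would apply Proposition~\ref{Proposition:subcases1}. Subcase~\ref{sc:triv} gives $\Mag$ itself. Subcases~\ref{sc:sr} and~\ref{sc:sl} are exchanged by the reverse automorphism, so it suffices to treat~\ref{sc:sr}: analysing equivariant relations on the right orbit by the size of the class of $(ab)c$ (which can only be $2$, $3$ or $6$) produces exactly the five relations $\RR_1,\dots,\RR_5$, hence the operads $\Pp_1,\dots,\Pp_5$. Subcase~\ref{sc:lr} amounts to an independent choice of a nontrivial right-orbit relation $\RR_i$ and a nontrivial left-orbit relation $\LL_j$; the $25$ pairs collapse to the $15$ with $i\le j$ once the reverse automorphism, which swaps $\Pp_{i;j}$ with $\Pp_{j;i}$, is taken into account, giving the operads $\Pp_{i;j}=\Mag/\langle\RR_i;\LL_j\rangle$.

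For Property~\ref{c:2} I would apply Proposition~\ref{Proposition:subcases2}. In Subcase~\ref{sc:simple} each class has exactly two elements, so the relation is determined by the single cross-identification of $(ab)c$; this yields the six pairwise-inequivalent relations $\RL_6,\dots,\RL_{11}$ and the operads $\Pp_6,\dots,\Pp_{11}$. Subcase~\ref{sc:comp} is the step I expect to be the crux: here each class has $2k$ elements with $k\in\{2,3,6\}$, split evenly between the two orbits, and I would show that $\R$ is generated by its restriction to the right orbit together with one cross-identification. The right restriction cannot be trivial (that would force $k=1$ and classes of size two), so it is one of $\RR_1,\dots,\RR_5$, while the cross-identification is recorded by some $\RL_j$ with $j\in\{6,\dots,11\}$; this produces the $5\times 6=30$ operads $\Pp_{i;j}=\Mag/\langle\RR_i;\RL_j\rangle$.

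Collecting the cases gives $\Mag$, the $5+6=11$ operads $\Pp_i$, and the $15+30=45$ operads $\Pp_{i;j}$, i.e. the $57$ operads claimed. The main difficulty is localized entirely in Subcase~\ref{sc:comp}: I must verify that the right-orbit restriction and the single cross-pairing are genuinely independent data and that, by equivariance, they regenerate the whole relation, so that no further coincidences or collapses arise beyond those already accounted for in the other subcases.
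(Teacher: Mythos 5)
Your proposal is correct and follows essentially the same route as the paper, which leaves this proposition without a separate proof because it is exactly the summary of the case analysis of Propositions~\ref{Proposition:cases}, \ref{Proposition:subcases1} and~\ref{Proposition:subcases2} carried out in the preceding section (including the observation that Lemma~\ref{Lemma:KS} only needs quadraticity, and the decomposition of Subcase~\ref{sc:comp} into an $\RR_i$ together with an $\RL_j$). The point you flag as the crux is handled in the paper by the same counting remark you make: each class of a Subcase~\ref{sc:comp} relation meets both orbits in equal numbers, so the relation generated by its right-orbit restriction and one cross-identification already produces a class of the right size and hence recovers the whole relation.
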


We get a total of $57$ operads; however, a bunch of them are isomorphic.

\begin{prp}\label{prp:iso}
  The couples of operads $(\Pp_{1;6}, \Pp_{1;7})$, $(\Pp_{1;8}, \Pp_{1;10})$, $(\Pp_{1;9}, \Pp_{1;11})$, $(\Pp_{2;6}, \Pp_{2;8})$, $(\Pp_{2;7}, \Pp_{2;9})$, $(\Pp_{2;10}, \Pp_{2;11})$, $(\Pp_{3;6}, \Pp_{3;11})$, $(\Pp_{3;7}, \Pp_{3;10})$ and $(\Pp_{3;8}, \Pp_{3;9})$ are couples of isomorphic operads. The triples of operads $(\Pp_{4;6}, \Pp_{4;9}, \Pp_{4;10})$ and $(\Pp_{4;7}, \Pp_{4;8}, \Pp_{4;11})$ are triples of isomorphic operads.
\end{prp}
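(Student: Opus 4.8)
The plan is to prove something slightly stronger than the statement: each listed family consists not merely of isomorphic operads but of operads with \emph{identical} defining relations. The organizing observation is that both the left orbit $L$ and the right orbit $R$ of the monomials of $\Mag(3)$ are regular $\mathfrak{S}_3$-sets (torsors), so an equivariant equivalence relation on $R$ is completely determined by the class of the single element $(ab)c$. First I would attach to each right relation $\RR_i$ the subgroup
$H_i=\{\sigma\in\mathfrak{S}_3 : \sigma\cdot(ab)c \sim_{\RR_i}(ab)c\}\le\mathfrak{S}_3$;
this is a subgroup because the relation is equivariant and the action is regular, and reading off the classes gives $H_1=\langle(2\,3)\rangle$, $H_2=\langle(1\,2)\rangle$, $H_3=\langle(1\,3)\rangle$ and $H_4=\langle(1\,2\,3)\rangle$. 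Dually, each cross relation $\RL_j$ is encoded by the unique left monomial $\ell_j$ with $(ab)c\sim\ell_j$, namely $\ell_6=a(bc)$, $\ell_7=a(cb)$, $\ell_8=b(ac)$, $\ell_9=b(ca)$, $\ell_{10}=c(ab)$ and $\ell_{11}=c(ba)$.

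The heart of the argument is the claim that the relation $\langle\RR_i;\RL_j\rangle$ depends on $\RL_j$ only through the $H_i$-orbit of $\ell_j$ in $L$. To prove it I would compute the equivalence class $C$ of $(ab)c$ directly: since $\RR_i$ identifies exactly the $H_i$-translates of $(ab)c$ and $\RL_j$ is equivariant, saturating under the two relations yields
$$C=\{\sigma\cdot(ab)c : \sigma\in H_i\}\ \sqcup\ \{\sigma\cdot\ell_j : \sigma\in H_i\},$$
a set of size $2|H_i|$ which one checks is already closed under both generators (the right relation touches only $R$, and the cross relation only sends each listed right element back to its listed left partner). Because the whole relation is generated equivariantly, every class is a translate $\sigma\cdot C$, so the entire partition is determined by $C$. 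Finally, $C$ is visibly unchanged when $\ell_j$ is replaced by any $\eta\cdot\ell_j$ with $\eta\in H_i$, whence $\Pp_{i;j}=\Pp_{i;j'}$ as soon as $\ell_j$ and $\ell_{j'}$ lie in one $H_i$-orbit.

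It then remains to compute the $H_i$-orbits on $\{\ell_6,\dots,\ell_{11}\}$ under the relabelling action $\sigma\cdot u(vw)=\sigma(u)\bigl(\sigma(v)\sigma(w)\bigr)$. For $i\in\{1,2,3\}$ the group $H_i$ has order two and acts freely, giving three orbits of size two: a short check produces $\{6,7\},\{8,10\},\{9,11\}$ for $H_1$, then $\{6,8\},\{7,9\},\{10,11\}$ for $H_2$, and $\{6,11\},\{7,10\},\{8,9\}$ for $H_3$. For $i=4$ the group $H_4=\langle(1\,2\,3)\rangle$ has order three and acts freely, giving the two orbits $\{6,9,10\}$ and $\{7,8,11\}$. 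These are exactly the couples and triples of the statement, so each family consists of equal — in particular isomorphic — operads.

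I expect the only real obstacle to be bookkeeping: fixing the two torsor identifications consistently and verifying that the set $C$ above is genuinely closed, so that no further identifications are forced and the partition is exactly as described. The orbit computations themselves are routine. It is worth remarking that the reverse automorphism $\rev$ is \emph{not} needed for this proposition: $\rev$ exchanges the two orbits and would relate operads with \emph{different} first index $i$, whereas every coincidence listed here preserves $i$ and is an honest equality of defining relations.
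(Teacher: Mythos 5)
Your proof is correct and follows essentially the same route as the paper: the paper likewise computes the equivalence class of $(ab)c$ (explicitly for $\Pp_{1;6}$ versus $\Pp_{1;7}$, obtaining $\{(ab)c,(ac)b,a(bc),a(cb)\}$) and then appeals to ``the exact same argument'' for the remaining cases. Your version simply packages that argument uniformly via the stabilizer subgroups $H_i$ and their orbits on $\{\ell_6,\dots,\ell_{11}\}$, and these orbit computations do reproduce exactly the couples and triples of the statement.
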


\begin{proof}
  Let us compute the class of $(ab)c$ in $\Pp_{1;6}$. We get $(ab)c=(ac)b$ by definition of $\RR_1$ and $(ab)c=a(bc)$ by definition of $\RL_6$. Moreover, by equivariance, $\RL_6$ implies $(ac)b=a(cb)$, and thus the class of $(ab)c$ is 
	$$\{(ab)c, (ac)b, a(bc), a(cb)\}$$
  Which is the same as in $\Pp_{1;7}$.
  The exact same argument works for the ten other cases.
\end{proof}

We also recognize some known operads:

\begin{lem}\label{lem:known}
  The following operads are isomorphic to Koszul operads listed in \cite{Operadia}:
  \begin{itemize}
    \item The operad $\Pp_1$ is isomorphic to $\NAP$.
    \item The operad $\Pp_6$ is isomorphic to $\Ass$.
    \item The operads $\Pp_{1;6}$ and $\Pp_{2;6}$ are isomorphic to $\Perm$.
    \item The operads $\Pp_{5;6}$, $\Pp_{5;7}$, $\Pp_{5;8}$, $\Pp_{5;9}$, $\Pp_{5;10}$, and $\Pp_{5;11}$ are all isomorphic to $\KDLieAdm$.
  \end{itemize}
\end{lem}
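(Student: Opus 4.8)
The plan is to establish each isomorphism by exhibiting a standard presentation of the target operad as a quotient $\Mag/\langle\R\rangle$ and matching it, on the single binary generator, with the presentation of the relevant $\Pp_i$ or $\Pp_{i;j}$. Since every operad here is generated by one non-symmetric binary element and is quadratic, an isomorphism sending the generator to the generator is induced as soon as the arity-$3$ relations coincide; so in each case it suffices to compare the equivariant relations on $\Mag(3)$.

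For $\Pp_1$ and $\Pp_6$ the matching is on the nose: $\NAP$ is by definition $\Mag/\langle (ab)c=(ac)b\rangle=\Mag/\langle\RR_1\rangle=\Pp_1$, and $\Ass$ is by definition $\Mag/\langle (ab)c=a(bc)\rangle=\Mag/\langle\RL_6\rangle=\Pp_6$. For the permutative cases, I would recall that $\Perm$ is presented as associative algebras subject to the permutative identity, that is $\Perm=\Mag/\langle (ab)c=(ba)c,\ (ab)c=a(bc)\rangle=\Mag/\langle\RR_2;\RL_6\rangle=\Pp_{2;6}$, which is again immediate. For $\Pp_{1;6}=\Mag/\langle\RR_1;\RL_6\rangle$ I would first compute, as in the proof of Proposition \ref{prp:iso}, the $\mathfrak{S}_3$-classes of $\Mag(3)$: the generators $\RR_1$ and $\RL_6$ force the class of $(ab)c$ to be $\{(ab)c,(ac)b,a(bc),a(cb)\}$, and equivariance splits the twelve monomials into three classes of four indexed by the outermost variable, so $\dim\Pp_{1;6}(3)=3=\dim\Perm(3)$. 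This is the opposite (right-permutative) presentation, which is isomorphic to $\Pp_{2;6}$ through the reverse automorphism, equivalently $\Perm\cong\Perm^{\mathrm{op}}$; hence $\Pp_{1;6}\cong\Perm$.

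The delicate case is $\KDLieAdm$. First I would show that the six operads $\Pp_{5;j}$, $j=6,\dots,11$, all coincide: the relation $\RR_5$ collapses the whole right orbit of $\Mag(3)$ into a single class, and adjoining any single $\RL_j$ (which links $(ab)c$ to one monomial of the left orbit) forces, by equivariance, every monomial of the left orbit into that same class. Thus each $\Pp_{5;j}$ equals the operad $\mathcal{Q}:=\Mag/\langle\R\rangle$, where $\R$ identifies all twelve monomials of $\Mag(3)$; in particular $\mathcal{Q}(3)$ is one-dimensional with trivial $\mathfrak{S}_3$-action. It then remains to identify $\mathcal{Q}$ with $\LieAdm^!$. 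For this I would compute the relation module of $\LieAdm$: it is the $\mathfrak{S}_3$-submodule of $\Mag(3)$ generated by the Lie-admissibility (Jacobi) element
\[
J=\sum_{\mathrm{cyclic}}\bigl((ab)c-(ba)c-c(ab)+c(ba)\bigr),
\]
and a direct check shows that transposing two variables sends $J$ to $-J$, so this module is the one-dimensional sign representation. Hence $\dim\LieAdm(3)=12-1=11$ and $\dim\LieAdm^!(3)=12-11=1$. Since $\LieAdm^!$ is a set-operad generated by one binary element (as recalled above) and $\LieAdm^!(3)$ is one-dimensional, its equivariant arity-$3$ relation can only be the unique one with a single class containing all twelve monomials, namely $\R$; therefore $\LieAdm^!=\mathcal{Q}=\Pp_{5;j}$.

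I expect the main obstacle to be this last identification, and specifically the sign bookkeeping in the operadic Koszul pairing. The clean dimension count above relies on knowing a priori that $\LieAdm^!$ is set-theoretic; to make the argument fully self-contained one must instead verify directly that the orthogonal complement $R^{\perp}$ of the sign-line $R$ under the Ginzburg--Kapranov pairing on $\F(E^\vee)(3)$ is exactly the codimension-one "sum-zero" subspace cut out by $\R$. This amounts to checking that $J$ pairs to the same nonzero scalar with each of the twelve dual monomials, i.e.\ that the alternating signs in $J$ are precisely cancelled by the signs of the pairing. Carrying out this bookkeeping correctly, rather than the conceptual structure, is where the real care will be needed.
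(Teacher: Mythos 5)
Your proposal is correct, and for the first three bullets it coincides with the paper's (very terse) proof, which simply observes that the presentations of $\Pp_1$, $\Pp_6$, $\Pp_{1;6}$ and $\Pp_{2;6}$ are the defining presentations of $\NAP$, $\Ass$ and $\Perm$; your extra details (the reverse automorphism exchanging $\Pp_{1;6}$ and $\Pp_{2;6}$, the three classes of four monomials) are consistent with, and slightly more explicit than, what is written there. The genuine divergence is in the $\KDLieAdm$ case. The paper polarizes: setting $[a,b]=ab-ba$ and $a.b=ab+ba$, the relations of $\Pp_{5;j}$ become $[a,[b,c]]=0$, $[a,b].c=0$, $[a,b.c]=0$ and $(a.b).c=a.(b.c)$, which is the standard presentation of the Koszul dual of $\LieAdm$ --- a one-line matching that requires no duality computation. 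You instead first show that all six $\Pp_{5;j}$ are literally the same quotient (all twelve monomials of $\Mag(3)$ collapse to one class --- a clean observation the paper leaves implicit), and then identify this quotient with $\LieAdm^!$ by computing that the relation module of $\LieAdm$ is the one-dimensional sign representation spanned by the alternating Jacobi element $J$, so that $\dim\LieAdm^!(3)=1$. Your route buys an explanation of \emph{why} there is only one operad hiding behind six labels, but it pays for it at the final step: either you invoke the fact that $\LieAdm^!$ is a set-operad on the given generator (which the paper asserts in Section 4 but does not prove, so your argument inherits that reliance), or you must carry out the sign bookkeeping showing that $R^{\perp}=\langle J\rangle^{\perp}$ is exactly the ``all monomials equal'' subspace --- a verification you correctly identify and describe but do not execute. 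The paper's polarization sidesteps this entirely by matching presentations directly on the primal side of the duality. Both arguments are sound; yours is the more structural, the paper's the more economical.
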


\begin{proof}
	For the first three points, we recover the exact definition of those operads. We refer to Table~\ref{fg:table} and their entry in \operadia \cite{Operadia} for a reference where they are defined, and their Koszulness is proven.

	For the last point, let us polarize the relations. Let $[a, b]=ab-ba$ and $a.b=ab+ba$. Rewriting the relations using those generators gives:
  	\begin{itemize}
    		\item $[a, [b, c]]=0$,
    		\item $[a, b].c=0$,
	    	\item $[a, b.c]=0$,
    		\item $a.(b.c)=(a.b).c$.
  	\end{itemize}
	We recognize the Koszul dual of the Lie admissible operad.
\end{proof}

\section{The non-Koszul operads}

Lemma~\ref{lem:known} and Propositions~\ref{prp:enum} and~\ref{prp:iso} reduce the number of operads to check from $57$ to $33$. We will now study the Koszulness of the remaining operads, starting with the non-Koszul ones. 

\begin{prp}
	The $25$ operads $\Pp_2$, $\Pp_3$, $\Pp_4$, $\Pp_5$, $\Pp_7$, $\Pp_8$, $\Pp_9$, $\Pp_{1;1}$, $\Pp_{1;3}$, $\Pp_{1;2}$, $\Pp_{2;3}$, $\Pp_{1;4}$, $\Pp_{2;4}$, $\Pp_{3;4}$, $\Pp_{4;4}$, $\Pp_{1;5}$, $\Pp_{2;5}$, $\Pp_{3;5}$, $\Pp_{1;8}$, $\Pp_{1;9}$, $\Pp_{2;7}$, $\Pp_{3;6}$, $\Pp_{3;7}$, $\Pp_{3;8}$, and $\Pp_{4,5}$ are not Koszul.
\end{prp}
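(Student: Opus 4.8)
The plan is to apply the Ginzburg--Kapranov criterion recalled in Section~\ref{sec:Kp}: if the compositional reverse of $-f_\Pp(-t)$ has a nonpositive coefficient, then $\Pp$ cannot be Koszul. Indeed, writing $\phi(t)=-f_\Pp(-t)$, the identity $f_{\Pp^!}(-f_\Pp(-t))=t$ forces $f_{\Pp^!}=\phi^{-1}$, so the reverse of $-f_\Pp(-t)$ would have to be the Hilbert series of an honest operad and hence have nonnegative coefficients. Thus for each of the $25$ operads it suffices to compute enough terms of $f_\Pp$, invert $-f_\Pp(-t)$ compositionally, and exhibit a single coefficient of the wrong sign.

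The first step is to compute the Hilbert series. Since every operad in the list is a set-operad realized as a quotient of $\Mag$, one has $\dim\Pp(n)$ equal to the number of equivalence classes of monomials of $\Mag(n)$ under the equivariant equivalence relation generated by the defining quadratic relation, where the monomials of $\Mag(n)$ are the $C_{n-1}\cdot n!$ labelled planar binary trees. In low arity this is a finite enumeration: the relation never touches arity $2$, so $\dim\Pp(2)=2$ throughout; in arity $3$ the classes are read directly off the left and right orbits described in Section~4; and in arity $4$, where $\dim\Mag(4)=120$, one propagates the quadratic relation through every composition to partition the $120$ trees and count the resulting classes. Because the reverse automorphism pairs several of these operads, and because many of the relations admit an evident normal form, a single class count often serves several cases; the resulting series are the ones collected in Table~\ref{fg:Hseries}.

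The second step is purely mechanical. For each operad $f_\Pp(t)=t+t^2+\tfrac{a_3}{6}t^3+\cdots$, so $-f_\Pp(-t)=t-t^2+\tfrac{a_3}{6}t^3-\cdots$ and its compositional inverse is computed term by term by Lagrange inversion; one carries the expansion only to the first index $n$ at which the coefficient $\dim\Pp^!(n)/n!$ comes out negative, which is impossible for an operad. Matching each of the $25$ operads against its entry in Table~\ref{fg:Hseries} then certifies non-Koszulness in one stroke.

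The main obstacle is the arity-$4$ (and occasionally arity-$5$) dimension count, since a miscount by even one class can alter the sign pattern of the inverse series. The delicate point is that the equivalence relation must be closed both under the $\mathfrak{S}_n$-action and under grafting the quadratic relation into every internal position, so the naive pairwise identifications must be saturated. I would control this by fixing, for each relation, a candidate set of normal-form monomials, checking that every monomial rewrites to one of them and that distinct normal forms are genuinely inequivalent; this bookkeeping is most safely carried out with the aid of the \operadia{} database \cite{Operadia}. Once the dimensions are pinned down, the compositional inversion and sign check are routine.
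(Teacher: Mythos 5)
Your proposal is correct and follows essentially the same route as the paper: the paper's proof simply computes the first few terms of each Hilbert series (by hand or with the Haskell calculator of \cite{Hask}) and observes that each series appears in Table~\ref{fg:Hseries}, whose entries record the first negative coefficient of $\rev_t(-f_\Pp(-t))$, so the Ginzburg--Kapranov criterion rules out Koszulness. Your additional discussion of how to count equivalence classes of labelled planar binary trees and certify normal forms is a reasonable elaboration of the computational step the paper delegates to software.
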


\begin{proof}
	To show this result, we compute the first few terms of the Hilbert series of each operad either by hand or using the Haskell calculator \cite{Hask}.
	Since all of them appear in Table~\ref{fg:Hseries} of the \hyperlink{Appendix}{Appendix}, those operads are not Koszul. 
\end{proof}

The naïve Hilbert series argument does not seem to work for the $8$ remaining operads, however; either by refining the argument or by computing the Koszul dual, we can show that $4$ of them are not Koszul.

\begin{prp}
  The operad $\Pp_{3;3}$ is not Koszul.
\end{prp}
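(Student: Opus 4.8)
The plan is to apply the Ginzburg--Kapranov criterion in a \emph{refined} form, pushing the Hilbert series far enough that the naïve test finally detects non-Koszulness. Recall that $\Pp_{3;3}=\Mag/\langle\RR_3;\LL_3\rangle$ is defined by $(ab)c=(cb)a$ and $a(bc)=c(ba)$; being a set-operad, each $\dim\Pp_{3;3}(n)$ is the number of orbits of arity-$n$ monomials under the equivariant closure of these two ``swap the outer two arguments'' moves. The first dimensions are $1,2,6$, and a short computation gives $\dim\Pp_{3;3}(4)=20$: among the five tree shapes, $(\cdot(\cdot\cdot))\cdot$ and $\cdot((\cdot\cdot)\cdot)$ are \emph{closed} under the moves, which act on each as the symmetric group on three of the four leaf-positions ($4$ orbits of size $6$), whereas $((\cdot\cdot)\cdot)\cdot$, $(\cdot\cdot)(\cdot\cdot)$ and $\cdot(\cdot(\cdot\cdot))$ are interconnected — each top-level move trades a subtree across a shape boundary — and fuse into twelve orbits of size $6$. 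These low-order values are entirely compatible with Koszulness, so the naïve criterion says nothing and one is forced to go further.

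First I would compute $\dim\Pp_{3;3}(n)$ up to $n=6$, either by continuing the orbit count or with the Haskell calculator of \cite{Hask}. This gives the central binomial coefficients $1,2,6,20,70,252$, i.e. $\dim\Pp_{3;3}(n)=\binom{2n-2}{n-1}$, so that
$$f_{\Pp_{3;3}}(t)=t+t^2+t^3+\tfrac{5}{6}t^4+\tfrac{7}{12}t^5+\tfrac{7}{20}t^6+\cdots,$$
whose derivative is the Bessel-type series $e^{2t}I_0(2t)$, in the spirit of $\Bess$. I would then form $g(t)=-f_{\Pp_{3;3}}(-t)$ and compute its compositional reverse term by term: the coefficients remain positive through order $5$ (they read $1,1,1,\tfrac56,\tfrac5{12}$), but at order $6$ one finds
$$[t^6]\,\rev\bigl(-f_{\Pp_{3;3}}(-t)\bigr)=-\tfrac{7}{30}<0.$$
Since this number would have to equal $\dim\Pp_{3;3}^{!}(6)/6!\ge 0$ if $\Pp_{3;3}$ were Koszul, the Ginzburg--Kapranov criterion is violated and $\Pp_{3;3}$ is not Koszul.

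As an alternative that stops at arity $4$, one may instead pass to the Koszul dual: the $6$-dimensional relation space inside $\Mag(3)$ has a $6$-dimensional orthogonal complement under the Koszul pairing, defining $\Pp_{3;3}^{!}$, and the arity $\le 4$ part of $f_{\Pp_{3;3}}$ already forces $\dim\Pp_{3;3}^{!}(4)=\tfrac56\cdot 4!=20$ via the dual functional equation; computing $\dim\Pp_{3;3}^{!}(4)$ directly and finding a different value settles the matter. In both routes the only genuine work — and the likely source of error — is the combinatorial bookkeeping in arity $5$ and $6$ (respectively, the twisted orthogonal pairing), because the defining relations do \emph{not} preserve the tree shape: a single application can turn the left comb $((\cdot\cdot)\cdot)\cdot$ into the balanced shape $(\cdot\cdot)(\cdot\cdot)$, so the equivalence relation on monomials is genuinely global and must be produced as the transitive closure of all edge-wise rewrites before the orbits can be counted reliably.
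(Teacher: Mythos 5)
Your argument breaks down at two independent points. First, the dimension count: you extrapolate from four values to the central binomial coefficients $\binom{2n-2}{n-1}$, i.e.\ $1,2,6,20,70,252,\dots$, which amounts to assuming that every equivalence class of arity-$n$ monomials has size exactly $(n-1)!$. That fails from arity $5$ on: the actual dimensions, computed in the paper with the Haskell calculator, are $1,2,6,20,60,182,546,\dots$, so already $\dim\Pp_{3;3}(5)=60$, not $70$. Your own closing caveat --- that the relations do not preserve the tree shape, so the equivalence classes must be obtained as a transitive closure --- is precisely where the size-$(n-1)!$ heuristic collapses. Second, and more seriously, even with the correct Hilbert series the naive Ginzburg--Kapranov positivity test detects nothing: with $f=t+t^2+t^3+\frac56 t^4+\frac12 t^5+\frac{91}{360}t^6+\frac{13}{120}t^7+\cdots$ one checks that the coefficients of $\rev(-f(-t))$ are $1,1,1,\frac56,\frac12,\frac{91}{360},\frac{83}{120},\dots$, all positive. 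Your claimed coefficient $-\frac{7}{30}$ at order $6$ is an artifact of the wrong input series; the paper states explicitly that this method does not seem to work for $\Pp_{3;3}$.

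The missing idea is self-duality. The relations $x\circ_1x-y\circ_2y$ and $y\circ_1y-x\circ_2x$ of $\Pp_{3;3}$ are preserved by the Koszul duality pairing, so $\Pp_{3;3}^!\simeq\Pp_{3;3}$; if the operad were Koszul one would therefore need not merely nonnegativity of the coefficients of $\rev(-f(-t))$ but the much stronger identity $f(-f(-t))=t$. This is what fails, and only at order $7$: $f(-f(-t))=t-\frac{7}{12}t^7+\mathcal{O}(t^8)$. Your ``alternative via the Koszul dual'' gestures in the right direction but never identifies the self-duality, and its proposed stopping point at arity $4$ cannot work: the Ginzburg--Kapranov prediction $\dim\Pp_{3;3}^!(4)=\frac56\cdot 4!=20$ agrees with the true value there (indeed with $\dim\Pp_{3;3}(4)$ by self-duality), and the first genuine discrepancy appears only at arity $7$.
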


\begin{proof}
  The method consisting of finding negative coefficients in $\rev(-f_{\Pp}(-t))$ does not seem to work in this case. However, one can notice that this operad is self-dual (in the sense of Koszul duality). We recall that $x$ is the generator without symmetries of this operad; let $y=x.(1\;2)$. The relations of $\Pp_{3;3}$ are:
  $$ x\circ_1 x - y\circ_2 y \qquad ; \qquad y\circ_1 y - x\circ_2 x $$
  And those relations are the same as the one of the operad $\Pp_{3;3}^!$ with $x\mapsto x_*$.\\
  The explicit method to compute a Koszul dual is given in \cite[Subsection 7.6.1]{AlgOp}.\\
  Let us compute the first dimensions of the operad $\Pp_{3;3}$ using the Haskell calculator \cite{Hask}. We get $(1, 2, 6, 20, 60, 182, 546, ?, ?, \dots)$. Thus we can compute:
  $$f_{\Pp_{3;3}}(-f_{\Pp_{3;3}}(-t))=t-\frac{7}{12}t^7+\mathcal{O}(t^8)$$
  Which shows that $\Pp_{3;3}$ is not Koszul.
\end{proof}

\begin{prp}
  The operad $\Pp_{5;5}$ is not Koszul.
\end{prp}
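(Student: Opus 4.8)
The plan is to apply the Ginzburg-Kapranov criterion of Section~\ref{sec:Kp}, in the same spirit as the treatment of $\Pp_{3;3}$; but since $\Pp_{5;5}$ is \emph{not} self-dual, I must compute its Koszul dual separately. First I would record that the naïve test---searching for a negative coefficient of $\rev(-f_{\Pp_{5;5}}(-t))$---does not conclude at the degrees within reach, exactly as announced before the statement. So the approach is to compute $\Pp_{5;5}^!$ explicitly via the orthogonal-complement construction of \cite[Subsection~7.6.1]{AlgOp}, and then to contradict the identity $f_{\Pp_{5;5}^!}(-f_{\Pp_{5;5}}(-t))=t$.

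Recall that $\Pp_{5;5}=\Mag/\langle \RR_5;\LL_5\rangle$ collapses each of the two $\mathfrak{S}_3$-orbits of $\Mag(3)$ to a single totally symmetric element, so $\dim\Pp_{5;5}(3)=2$. Hence its space of quadratic relations $\R\subset\F(x)(3)$ is the direct sum of the two augmentation (sum-zero) subspaces of the left- and right-comb regular representations; as an $\mathfrak{S}_3$-module $\R\cong 2\,\mathrm{sgn}\oplus 4\,\mathrm{std}$, of dimension $10$. Consequently the dual relation space $\R^\perp$ is only $2$-dimensional, being the annihilator of the two trivial summands of $\F(x)(3)/\R$, so $\Pp_{5;5}^!$ is an operad on one binary generator with just two relations. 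The key computation is to write these two relations explicitly, which requires carrying the sign twist $V^!=V^*\otimes\mathrm{sgn}_2$ through the pairing so that the trivial-type quotient of $\F(x)(3)$ is dualized correctly.

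With a presentation of $\Pp_{5;5}^!$ in hand, I would feed both $\Pp_{5;5}$ and $\Pp_{5;5}^!$ to the Haskell calculator \cite{Hask} to obtain their first several arity-wise dimensions, and hence the truncations of $f_{\Pp_{5;5}}$ and $f_{\Pp_{5;5}^!}$. A direct expansion shows that $f_{\Pp_{5;5}^!}(-f_{\Pp_{5;5}}(-t))=t$ holds automatically through order $t^3$: this is forced by quadraticity, since with $\dim\Pp_{5;5}(2)=\dim\Pp_{5;5}^!(2)=2$, $\dim\Pp_{5;5}(3)=2$ and $\dim\Pp_{5;5}^!(3)=10$ the $t^2$ and $t^3$ coefficients both vanish. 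The goal is therefore to exhibit a nonzero coefficient at some order $t^n$ with $n\ge 4$, which certifies by Ginzburg-Kapranov that $\Pp_{5;5}$ is not Koszul.

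The main obstacle is the middle step: pinning down the Koszul-dual presentation---in particular the sign bookkeeping that distinguishes a symmetric from a skew-symmetric combination of the dual generator---and then pushing both dimension sequences to the first degree at which the composed series departs from $t$. As the $\Pp_{3;3}$ computation (where the discrepancy only surfaced at $t^7$) suggests, this breaking degree may be large, so the expansion must be carried far enough to be conclusive. Should the dual prove awkward, an alternative is to refine the naïve argument by computing further terms of $f_{\Pp_{5;5}}$ until a negative coefficient of $\rev(-f_{\Pp_{5;5}}(-t))$ appears.
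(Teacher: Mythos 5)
Your setup is correct as far as it goes: $\dim\Pp_{5;5}(3)=2$, the relation module is $2\,\mathrm{sgn}\oplus 4\,\mathrm{std}$, the dual has a $2$-dimensional space of relations, and the composite $f_{\Pp_{5;5}^!}(-f_{\Pp_{5;5}}(-t))$ agrees with $t$ through order $3$ for the dimension-count reason you give. The gap is that your whole strategy is the \emph{ungraded} Ginzburg--Kapranov test, which is exactly what the paper reports as inconclusive here, and you offer no reason to expect the composite series to depart from $t$ at a degree you can actually reach. The paper's proof instead polarizes the relations (setting $[a,b]=ab-ba$ and $a.b=ab+ba$), observes that the resulting presentation --- $(a.b).c=a.(b.c)$, $[a.b,c]=-[a,b.c]$, $[a,b].c=0$, $[[a,b],c]=0$ --- is homogeneous in the number of brackets and hence carries an extra weight grading $u$. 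This gives the closed form $f_{\Pp_{5;5}}=\exp(t)-1+\frac{u}{2}t^2+\frac{u}{6}t^3$ (dimensions $1,\,1+u,\,1+u,\,1,1,\dots$) and lets one apply the Ginzburg--Kapranov criterion \emph{gradedly}: every $u$-homogeneous component of $\rev(-f(-t))$ must be nonnegative. The first obstruction is $\frac{-1983044460002323872\,u^2}{20!}t^{20}$, the same as in \cite[Proposition 3.6]{Assdep} because the two-variable series coincide.

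Without the grading you cannot see this. At $u=1$ the negative $u^2$-contribution at $t^{20}$ is mixed with the other $u$-components, so the ungraded reverse series (your fallback) need not acquire a negative coefficient there or at any accessible order; and your main route requires the actual dimensions of $\Pp_{5;5}^!$ up to the breaking degree, which the Haskell calculator cannot deliver anywhere near arity $20$ (for $\Pp_{3;3}$ the paper only reached arity $7$). So the plan as written stalls: either the ungraded composite equals $t$ through every order you can compute, or you cannot certify where it fails. The one genuinely needed idea --- that the polarized presentation is bracket-homogeneous, so the Hilbert series refines to a series in $t$ and $u$ and the Koszul test refines with it --- is missing from your proposal.
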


\begin{proof}
  Once again the generating series method does not seem to work in this case. Let us polarize the relations of $\Pp_{5;5}$ by defining $[a, b]=ab-ba$ and $a.b=ab+ba$ and rewriting the relations of $\Pp_{5;5}$ using those:
  \begin{itemize}
    \item $(a.b).c=a.(b.c)$,
    \item $[a.b, c]=-[a, b.c]$,
    \item $[a, b].c=0$,
    \item $[[a, b], c]=0$.
  \end{itemize}
  From this presentation we can see that $\Pp_{5;5}$ is graded by $[\cdot,\cdot ]$ and the dimensions can be easily computed. We get $(1, 1+u, 1+u, 1, 1, 1, \dots)$, where $u$ is the generator of the grading. 
	This case looks very much like \cite[Proposition 3.6]{Assdep} (however, this is not the same operad). We have that:
  $$f_{\Pp_{5;5}}=\exp(t)-1+\frac{u}{2}t^2+\frac{u}{6}t^3$$
  The Ginzburg-Kapranov criterion can be used on this series and the first negative term is $$\frac{-1983044460002323872u^2}{20!}t^{20}$$ which is the exact same term as in \cite[Proposition 3.6]{Assdep} since the power series are the same. Thus $\Pp_{5;5}$ is not Koszul.
\end{proof}

\begin{prp}
  The operad $\Pp_{4;6}$ is not Koszul.
\end{prp}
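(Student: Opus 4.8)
The plan is to mirror the treatment of $\Pp_{5;5}$: compute the Hilbert series of $\Pp_{4;6}$ in closed form and then feed it into the Ginzburg--Kapranov criterion. First I would unwind the presentation. By Lemma~\ref{lem:known} the relation $\RL_6$ alone yields $\Ass$, so $\Pp_{4;6}$ is the quotient of $\Ass$ by the image of $\RR_4$, and in the associative setting $\RR_4$ reads $abc=bca=cab$, i.e. the ternary product is invariant under cyclic permutation. Thus $\Pp_{4;6}$ governs associative algebras with cyclically invariant triple products, and $\Pp_{4;6}(n)$ is a quotient of $\mathbb{Z}[\mathfrak{S}_n]$ spanned by the associative monomials (words) in $a_1,\dots,a_n$, the relations identifying $w$ with $g\cdot w$ for $g$ ranging over a subgroup $G_n\le\mathfrak{S}_n$.

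The step I expect to be the main obstacle is pinning down the dimensions, because the naive guess is misleading. One is tempted to think ``associative $+$ cyclic'' identifies words only up to a global rotation, giving necklace counts $(n-1)!$; this is false. Composing the relation $\rho\colon xyz=yzx$ with the product lets one cyclically rotate any three \emph{consecutive blocks}, not merely single letters. In arity $3$ only the $3$-cycle $(1\,2\,3)$ is available, so $\Pp_{4;6}(3)$ splits into the two classes $\{abc,bca,cab\}$ and $\{acb,cba,bac\}$ and has dimension $2$. But already in arity $4$ an odd permutation appears: applying $\rho$ to the blocks $(a)(b)(cd)$ gives $abcd=bcda$, a $4$-cycle, whereas rotating three consecutive letters gives $3$-cycles; together these generate all of $\mathfrak{S}_4$, so $\dim\Pp_{4;6}(4)=1$. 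For $n\ge4$ the resulting full symmetry on any four consecutive letters realizes every adjacent transposition as a relation, so $\mathfrak{S}_n$ acts trivially and $\dim\Pp_{4;6}(n)=1$. With $\dim\Pp_{4;6}(1)=1$ and $\dim\Pp_{4;6}(2)=2$ (no arity-$2$ relations), this gives the sequence $(1,2,2,1,1,1,\dots)$, which can be double-checked with the Haskell calculator \cite{Hask}.

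From this sequence I would assemble the exponential generating function
$$f_{\Pp_{4;6}}(t)=t+t^2+\frac{t^3}{3}+\sum_{n\ge4}\frac{t^n}{n!}=\exp(t)-1+\frac{t^2}{2}+\frac{t^3}{6}.$$
The decisive observation is that this is exactly the series of $\Pp_{5;5}$ specialized at $u=1$, equivalently the series of \cite[Proposition~3.6]{Assdep} at $u=1$. Since compositional inversion commutes with the substitution $u=1$ (the leading coefficient stays $1$, and the inversion recursion is polynomial in the coefficients), $\rev(-f_{\Pp_{4;6}}(-t))$ is obtained from the reverse already computed for $\Pp_{5;5}$ by setting $u=1$; all its coefficients up to degree $19$ remain nonnegative, while the degree-$20$ coefficient becomes the value $-1983044460002323872/20!$, which is negative. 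By the Ginzburg--Kapranov criterion a Koszul operad cannot have a reverse with a negative coefficient, so $\Pp_{4;6}$ is not Koszul.
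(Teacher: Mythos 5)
Your unwinding of the presentation (that $\RL_6$ gives $\Ass$ and $\RR_4$ then imposes cyclic invariance of the triple product), and the resulting dimension count $(1,2,2,1,1,\dots)$ with ungraded series $\exp(t)-1+\tfrac{t^2}{2}+\tfrac{t^3}{6}$, are correct and consistent with the paper. The fatal step is the last one. The negative number $-1983044460002323872/20!$ quoted for $\Pp_{5;5}$ is the coefficient of $u^2t^{20}$ in $\rev(-f(-t))$ for the \emph{weight-graded} series $f=\exp(t)-1+\tfrac{u}{2}t^2+\tfrac{u}{6}t^3$; the full coefficient of $t^{20}$ is a polynomial $\sum_k c_ku^k$ of which only $c_2$ is asserted to be negative. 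Setting $u=1$ replaces this by $\sum_k c_k$, which can perfectly well be positive, so negativity of one graded component does not survive the specialization. The Ginzburg--Kapranov criterion is legitimate graded piece by graded piece precisely because the Koszul dual inherits the weight grading, and that extra leverage is exactly what you discard by passing to $u=1$. The paper in fact signals that your specialized computation must fail: $\Pp_{4;6}$ is listed among the eight operads for which ``the na\"ive Hilbert series argument does not seem to work,'' and indeed $\Pp_{5;5}$, $\Pp_{4;6}$ and $\Pp_{4;9}$ all share the same ungraded series $\exp(t)-1+\tfrac{t^2}{2}+\tfrac{t^3}{6}$, yet the obstructions found are at $u^2t^{20}$ and $u^5t^6$ respectively --- if the ungraded reverse had a negative coefficient, a single table entry would have disposed of all three. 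Relatedly, matching the \emph{ungraded} Hilbert series of a known non-Koszul operad proves nothing by itself.

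To repair the argument you need the graded refinement: $\Pp_{4;6}$ is graded by the number of skew-symmetric (bracket) operations, its graded series is $\exp(t)-1+\tfrac{u}{2}t^2+\tfrac{u}{6}t^3$, and one then cites the negative coefficient of $u^2t^{20}$. The paper takes a shorter route: polarizing the relations of $\Pp_{4;6}$ into $[a,[b,c]]=0$, $[a,b].c=a.[b,c]$, $[a,b.c]=0$, $(a.b).c=a.(b.c)$ identifies it \emph{as an operad} with the one treated in \cite[Proposition 3.6]{Assdep}, whose non-Koszulness (proved there by exactly this weight-graded criterion) is then quoted directly.
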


\begin{proof}
  Let us polarize the relations. We get:
  \begin{itemize}
    \item $[a, [b, c]]=0$,
    \item $[a, b].c=a.[b, c]$,
    \item $[a, b.c]=0$,
    \item $(a.b).c=a.(b.c)$
  \end{itemize}
  We recognize the operad of \cite[Proposition 3.6]{Assdep} which is not Koszul.
\end{proof}
  
\begin{prp}
  The operad $\Pp_{4;9}$ is not Koszul.
\end{prp}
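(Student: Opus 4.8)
The quickest route is to recognize $\Pp_{4;9}$ as an operad that has already been handled. By construction $\Pp_{4;9} = \Mag/\langle \RR_4; \RL_9\rangle$, and Proposition~\ref{prp:iso} records that $\Pp_{4;6}$, $\Pp_{4;9}$ and $\Pp_{4;10}$ form a triple of isomorphic operads. The immediately preceding proposition shows that $\Pp_{4;6}$ is not Koszul, because after polarization it coincides with the operad of \cite[Proposition~3.6]{Assdep}. Since the Koszul property is an invariant of the isomorphism class of an operad, the plan is simply to transport this conclusion along the isomorphism $\Pp_{4;9} \cong \Pp_{4;6}$ and conclude that $\Pp_{4;9}$ is not Koszul.

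If one prefers a self-contained argument that does not cite Proposition~\ref{prp:iso}, I would instead redo the polarization directly. Setting $[a,b] = ab - ba$ and $a.b = ab + ba$, I would take the generating relations $(ab)c = b(ca)$ (from $\RL_9$) together with $(ab)c = (bc)a = (ca)b$ (from $\RR_4$) and close them under the $\mathfrak{S}_3$-action using the orbit diagram drawn above. The expected outcome is that the class of $(ab)c$ is exactly $\{(ab)c,(bc)a,(ca)b,a(bc),b(ca),c(ab)\}$, with the second class its image under $(1\;2)$; these are precisely the two classes obtained for $\Pp_{4;6}$. Consequently the polarized relations are again $[a,[b,c]]=0$, $[a,b].c = a.[b,c]$, $[a,b.c]=0$ and $(a.b).c = a.(b.c)$, and one recognizes once more the non-Koszul operad of \cite[Proposition~3.6]{Assdep}.

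The only genuine computation in either route is the finite, equivariant check that the $\mathfrak{S}_3$-closure of $\{\RR_4; \RL_9\}$ partitions the twelve monomials of $\Mag(3)$ in exactly the same way as $\{\RR_4; \RL_6\}$ does; this is the mild obstacle, but it is immediate from the orbit picture, since any single identification of $(ab)c$ with an element of the left orbit lying in the $3$-cycle orbit of $a(bc)$ generates, together with $\RR_4$, the same six-element class. Once this coincidence of relations is established, non-Koszulness requires no further work and is inherited entirely from the analysis of \cite[Proposition~3.6]{Assdep}.
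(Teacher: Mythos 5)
Your reduction is logically valid for the statement as literally written, and your orbit computation is right: with $\RR_4$ identifying the three-cycle orbit $\{(ab)c,(bc)a,(ca)b\}$, adding $\RL_9\colon (ab)c=b(ca)$ and closing under $\mathfrak{S}_3$ glues this class to $\{a(bc),b(ca),c(ab)\}$ exactly as $\RL_6$ does, so $\Pp_{4;9}=\Pp_{4;6}$ as quotients of $\Mag$ and non-Koszulness transports along Proposition~\ref{prp:iso}. This is a genuinely different (and much shorter) route than the paper's, which instead polarizes the relations directly, observes that the resulting operad is graded by the bracket, computes the graded Hilbert series $\exp(t)-1+\frac{u}{2}t^2+\frac{u^2}{6}t^3$, and exhibits the negative coefficient $\frac{-35u^5}{6!}t^6$ in the Ginzburg--Kapranov test.

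However, you should notice what the divergence reveals. The polarized relations in the paper's proof are $[a,[b,c]]=-[[a,b],c]$, $[a,b].c=0$, $[a,b.c]=0$, $(a.b).c=a.(b.c)$, whereas the operad $\Pp_{4;6}=\Pp_{4;9}$ has polarized relations $[a,[b,c]]=0$, $[a,b].c=a.[b,c]$, $[a,b.c]=0$, $(a.b).c=a.(b.c)$ --- the ones you obtain, and the ones already used for $\Pp_{4;6}$. The relations the paper actually computes with here are those of the \emph{other} triple $\Pp_{4;7}\simeq\Pp_{4;8}\simeq\Pp_{4;11}$, where $\RR_4$ is glued to the opposite three-cycle orbit $\{a(cb),b(ac),c(ba)\}$; consistently, the list of $33$ operads left to check contains $\Pp_{4;7}$ and not $\Pp_{4;9}$. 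So the label in the statement is almost certainly a slip for $\Pp_{4;7}$. Your argument, while correct for $\Pp_{4;9}$ as defined, would then leave the triple $(\Pp_{4;7},\Pp_{4;8},\Pp_{4;11})$ untreated in the classification; that operad is not isomorphic to $\Pp_{4;6}$ and genuinely requires the graded Ginzburg--Kapranov computation the paper supplies. If you keep your reduction, you must add a separate non-Koszulness argument for $\Pp_{4;7}$.
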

  
\begin{proof}
  Let us polarize the relations. We get:
  \begin{itemize}
    \item $[a, [b, c]]=-[[a, b], c]$, 
    \item $[a, b].c=0$, 
    \item $[a, b.c]=0$, 
    \item $(a.b).c=a.(b.c)$.
  \end{itemize}
	From this presentation, we can notice that $\Pp_{4;9}$ is graded by $[\cdot,\cdot ]$, and the dimensions can be easily computed. 
	We get $(1, 1+u, 1+u^2, 1, 1, 1, \dots)$, where $u$ is the generator of the grading. We have that:
  $$f_{\Pp_{4;9}}=\exp(t)-1+\frac{u}{2}t^2+\frac{u^2}{6}t^3$$ 
  The Ginzburg-Kapranov criterion can be used on this series and the first negative term is $\frac{-35u^5}{6!}t^6$ which shows that this operad is not
  Koszul.
\end{proof}

\section{The Koszul operads}

We have $4$ operads left to check, namely the operads $\Pp_{2,2}$, $\Pp_{10}$, $\Pp_{11}$, and $\Pp_{2,10}$. Let us show that they are Koszul.

\begin{thm}
  The operad $\Pp_{2;2}$ is Koszul and self-dual. 
\end{thm}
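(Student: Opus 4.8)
The plan is to prove both statements after changing to the \emph{polarized} generators. Following the paper's convention, set $a.b = ab+ba$ (a symmetric generator) and $[a,b]=ab-ba$ (an antisymmetric generator), so that $\Mag=\F(\,.\,,[\,,])$, with $\{\,.\,,[\,,]\}$ merely a change of basis of the arity-$2$ generators $\{x,y\}$. Writing the defining relations in these generators, I get $(ab)c-(ba)c=\tfrac12\big([a,b].c+[[a,b],c]\big)$ from $\RR_2$ and, using the image $c(ab)=c(ba)$, the relation $c(ab)-c(ba)=\tfrac12\big([a,b].c-[[a,b],c]\big)$ from $\LL_2$. Taking sums and differences of these (and running over the $\mathfrak{S}_3$-orbit) should show that the $6$-dimensional relation space of $\Pp_{2;2}$ inside $\Mag(3)$ is spanned by the monomials $[a,b].c$ and $[[a,b],c]$ together with their $\mathfrak{S}_3$-images. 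Equivalently, the relations are exactly \emph{``every quadratic monomial whose inner operation is the bracket vanishes''}, i.e. $[a,b].c=0$ and $[[a,b],c]=0$, matching the description of $\Pp_{2;2}$ as built from $\CMag$ and $\ANil$ with the relation $[a,b].c=0$.

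For Koszulness, the key point is that in this polarized presentation the relations are genuine \emph{monomials} in the generators $\{\,.\,,[\,,]\}$. Hence $\Pp_{2;2}$ is a quadratic monomial operad, and Proposition~\ref{Proposition:K} applies directly, exactly as it was used to make $\CMag$ and $\ANil$ Koszul. The only thing needing care here is that the change of basis from $\{x,y\}$ to $\{\,.\,,[\,,]\}$ really converts the a priori non-monomial differences $(ab)c-(ba)c$ and $a(bc)-a(cb)$ into a span of honest tree-monomials, which is precisely the orbit computation of the first paragraph.

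For self-duality, I would describe $\Mag(3)$ by the four families of monomials indexed by the (outer, inner) pair of generators in $\{\,.\,,[\,,]\}$, each family being $3$-dimensional, for a total of $12=\dim\Mag(3)$. The relations of $\Pp_{2;2}$ are exactly the two families with inner operation $[\,,]$. Applying the monomial recipe for quadratic duality (dualize the generators via $V^*=V^\vee\otimes\mathrm{sgn}$ and replace the relations by the complementary set of monomials, as in \cite[Subsection 7.6.1]{AlgOp}), the dual $\Pp_{2;2}^!$ acquires the two complementary families, i.e. those with inner operation $.$, while the sign twist swaps the symmetric generator with the antisymmetric one. Relabelling the (now antisymmetric) dual of $.$ as the bracket and the (now symmetric) dual of $[\,,]$ as the product, the relations of $\Pp_{2;2}^!$ become once more ``every monomial with inner operation the bracket vanishes'', which is precisely the presentation of $\Pp_{2;2}$; this yields the isomorphism $\Pp_{2;2}^!\cong\Pp_{2;2}$.

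The main obstacle is bookkeeping rather than conceptual. I must verify that the Koszul-duality pairing is diagonal with respect to the (outer, inner) decomposition of $\Mag(3)$ — so that the orthogonal complement of ``inner $=[\,,]$'' really is ``inner $=.$'' — and that the twist $V^*=V^\vee\otimes\mathrm{sgn}$ indeed sends the symmetric generator to an antisymmetric one and vice versa, since it is this combination of complementation and generator-dualization that closes up into an isomorphism of presentations. Once these conventions are pinned down, Koszulness follows from Proposition~\ref{Proposition:K} and self-duality from the monomial computation above.
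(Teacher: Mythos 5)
Your proof is correct and follows essentially the same route as the paper: polarize the relations of $\Pp_{2;2}$ into the monomials $[a,b].c$ and $[[a,b],c]$ (the paper writes the latter as $[a,[b,c]]$, the same $\mathfrak{S}_3$-orbit), invoke Proposition~\ref{Proposition:K} for Koszulness, and compute the Koszul dual for self-duality. The only difference is that you spell out the duality computation --- complementary monomial families plus the $\mathrm{sgn}$ twist exchanging the symmetric and antisymmetric generators --- which the paper leaves as ``computation of its Koszul dual shows that it is self-dual.''
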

    
\begin{proof}
  Let us polarize the relations of $\Pp_{2;2}$. We get:
  \begin{itemize} 
    \item $[a, [b, c]]=0$, 
    \item $[a, b].c=0$, 
  \end{itemize}
  By Proposition \ref{Proposition:K}, $\Pp_{2;2}$ is Koszul. Moreover, its generating series can be computed and is $2-t-\sqrt{1-2t}$. Computation of its Koszul dual shows that it is self-dual. 
\end{proof}

\begin{thm}
  The operad $\Pp_{10}$ is Koszul and is self-dual.
\end{thm}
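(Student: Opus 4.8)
The plan is to imitate the proof for $\Pp_{2;2}$: polarize the defining relation, check that it turns into quadratic monomials, and then invoke Proposition~\ref{Proposition:K}; self-duality is then read off from the Koszul-dual presentation.

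Recall that $\Pp_{10}=\Mag/\langle\RL_{10}\rangle$ with $\RL_{10}=\{(ab)c=c(ab)\}$, understood up to its $\mathfrak{S}_3$-equivariant closure. Setting $a.b=ab+ba$ and $[a,b]=ab-ba$, so that $ab=\tfrac12(a.b+[a,b])$ and $ba=\tfrac12(a.b-[a,b])$, the generating relation $(ab)c=c(ab)$ is precisely $[ab,c]=0$. Substituting the polarized expression for $ab$, and doing the same for the image $[ba,c]=0$ under the transposition $(1\;2)$, yields
\[
[a.b,c]+[[a,b],c]=0,\qquad [a.b,c]-[[a,b],c]=0,
\]
hence $[a.b,c]=0$ and $[[a,b],c]=0$. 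The four remaining permutations only reproduce the $\mathfrak{S}_3$-orbits of these two relations, so the polarized presentation of $\Pp_{10}$ is generated by the single monomials $[a.b,c]=0$ and $[[a,b],c]=0$ in the symmetric generator $a.b$ and the antisymmetric generator $[a,b]$. Thus $\Pp_{10}$ is quadratic monomial and Koszul by Proposition~\ref{Proposition:K}. This also makes the description precise: the symmetric generator is free ($\CMag$), the antisymmetric one satisfies the nilpotency relation $[[a,b],c]=0$ ($\ANil$), and the two are linked by $[a.b,c]=0$.

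For self-duality, I would use that the generator of $\Mag$ spans the regular representation of $\mathfrak{S}_2$, which polarizes as trivial $\oplus$ sign $=\langle a.b\rangle\oplus\langle[a,b]\rangle$; under Koszul duality the space of generators is twisted by the sign representation, so $\Pp_{10}^!$ is again generated by one symmetric and one antisymmetric binary operation, with the two slots interchanged. I would organize the $12$-dimensional space of arity-$3$ quadratic monomials into the four $3$-dimensional $\mathfrak{S}_3$-stable blocks $s\!\circ\! s$, $s\!\circ\!\lambda$, $\lambda\!\circ\! s$, $\lambda\!\circ\!\lambda$ (writing $s$, $\lambda$ for the symmetric, antisymmetric generators), and check that the Koszul pairing is block-diagonal, matching each block with the one obtained by swapping $s$ and $\lambda$. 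Since the relations of $\Pp_{10}$ fill exactly the blocks $\lambda\!\circ\! s$ and $\lambda\!\circ\!\lambda$, their orthogonal complement fills the two blocks of identical shape in the dual, so $\Pp_{10}^!$ has the presentation $[a.b,c]=0$, $[[a,b],c]=0$ — the same as $\Pp_{10}$. Hence $\Pp_{10}^!\cong\Pp_{10}$.

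The polarization bookkeeping is routine. The step needing the most care is the Koszul-dual computation, that is, verifying the block structure of the quadratic pairing together with the precise signs, so that the two relation-blocks are genuinely matched with their duals; I would carry this out following the explicit algorithm of \cite[Subsection 7.6.1]{AlgOp}, exactly as in the treatment of $\Pp_{3;3}$.
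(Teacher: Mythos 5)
Your proposal is correct and follows essentially the same route as the paper: polarize $\RL_{10}$ into the quadratic monomial relations $[a.b,c]=0$ and $[[a,b],c]=0$ (the paper writes them as $[a,b.c]=0$ and $[a,[b,c]]=0$, the same $\mathfrak{S}_3$-orbits), conclude Koszulness from Proposition~\ref{Proposition:K}, and establish self-duality by computing the Koszul dual. Your sketch of the dual computation via the sign-twist swapping the symmetric and antisymmetric generators and the block structure of the quadratic pairing is in fact more detailed than the paper's one-line ``computation of its Koszul dual shows that it is self-dual.''
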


\begin{proof}
  Let us polarize the relations of $\Pp_{10}$. We get::
  \begin{itemize}
    \item $[a, [b, c]]=0$,
    \item $[a, b.c]=0$,
  \end{itemize}
  By Proposition \ref{Proposition:K}, $\Pp_{10}$ is Koszul. Moreover, its generating series can be computed and is\\ $1-\sqrt{1-2t-t^2}$. Computation of its Koszul dual shows that it is self-dual.
\end{proof}

\begin{thm}
  The operad $\Pp_{11}$ is Koszul.
\end{thm}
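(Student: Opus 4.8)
The plan is to mirror the strategy already used for $\Pp_{2;2}$ and $\Pp_{10}$: pass to the polarized generators, exhibit $\Pp_{11}$ as a quadratic monomial operad, and invoke Proposition~\ref{Proposition:K}. Write $a.b = ab+ba$ and $[a,b] = ab-ba$, so that (over a field in which $2$ is invertible, which I assume) one has $\Mag \cong \F(\,\cdot\,,[\,\cdot\,,\cdot\,])$, the free operad on a symmetric generator $\cdot$ and an antisymmetric generator $[\,\cdot\,,\cdot\,]$; here $\CMag = \F(\,\cdot\,)$ and $\AMag = \F([\,\cdot\,,\cdot\,])$.

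First I would polarize the defining relation $\RL_{11} = \{(ab)c = c(ba)\}$. A direct expansion into the twelve monomials of $\Mag(3)$ gives
$$(ab)c - c(ba) = \tfrac12\bigl([a,b].c + [a.b,c]\bigr),$$
so $\RL_{11}$ taken alone only produces the \emph{binomial} $[a,b].c + [a.b,c] = 0$, which is not monomial. The crux is to use equivariance: since $\RL_{11}$ defines an equivariant equivalence relation, it also forces the image $(ba)c = c(ab)$ of the generating identification under the transposition $(1\,2)$, and the same expansion yields
$$(ba)c - c(ab) = \tfrac12\bigl([a.b,c] - [a,b].c\bigr).$$
Adding and subtracting the two polarized relations splits them into the two pure monomials
$$[a.b,c] = 0, \qquad [a,b].c = 0.$$

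Next I would observe that $[a.b,c]$ and $[a,b].c$ are genuine quadratic monomials in the generators $\cdot$ and $[\,\cdot\,,\cdot\,]$ — namely the tree-monomials $[\,\cdot\,,\cdot\,]\circ_1\!\cdot$ and $\cdot\circ_1[\,\cdot\,,\cdot\,]$ — and that the $\mathfrak{S}_3$-orbits of these two monomials span exactly the $6$-dimensional space of relations cut out in $\Mag(3)$ by $\RL_{11}$. Hence $\Pp_{11} \cong \F(\,\cdot\,,[\,\cdot\,,\cdot\,])/\langle [a.b,c],\,[a,b].c\rangle$ is quadratic monomial, and Proposition~\ref{Proposition:K} gives that $\Pp_{11}$ is Koszul. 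Conceptually, killing these two \emph{mixed} monomials leaves only the pure families $(a.b).c,\dots$ and $[[a,b],c],\dots$, which is exactly the sense in which $\Pp_{11}$ is the connected sum of $\CMag$ and $\AMag$; no Hilbert-series or Koszul-dual computation is needed.

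I expect the only real obstacle to be the bookkeeping around equivariance: one must check that the single set-theoretic identification $\RL_{11}$ really imposes \emph{both} binomials above (so that their linear combination produces honest monomials), and that no relation appears beyond the two $\mathfrak{S}_3$-orbits of $[a.b,c]$ and $[a,b].c$. This reduces to a finite dimension count in $\Mag(3)$ — the relation space has dimension $6$, matching the two $3$-dimensional orbits — after which the appeal to Proposition~\ref{Proposition:K} is immediate.
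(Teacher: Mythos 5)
Your proof is correct and follows essentially the same route as the paper: polarize the relation $\RL_{11}$, observe that the ideal is generated by the two quadratic monomials $[a.b,c]$ and $[a,b].c$ (the paper writes these as $[a,b.c]$ and $a.[b,c]$, the same relations up to sign and relabeling), identify the quotient as the connected sum of $\CMag$ and $\AMag$, and invoke Proposition~\ref{Proposition:K}. The only difference is that you carry out the polarization and the dimension count explicitly where the paper simply states the result.
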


\begin{proof}
  Let us polarize the relations of $\Pp_{11}$. We get:
  \begin{itemize}
    \item $a.[b, c]=0$,
    \item $[a, b.c]=0$,
  \end{itemize}
  We get the connected sum of the magmatic operad over a symmetric generator and the magmatic operad over a skew-symmetric generator. By Proposition \ref{Proposition:K}, it is Koszul. Moreover, its generating series can be computed and is $2-t-\sqrt{1-2t}$.
\end{proof}

\begin{thm}
  The operad $\Pp_{2;10}$ is Koszul.
\end{thm}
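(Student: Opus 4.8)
The plan is to treat $\Pp_{2;10}$ exactly as the three preceding operads $\Pp_{2;2}$, $\Pp_{10}$ and $\Pp_{11}$: polarize the two generating relations, check that they become \emph{monomial} in the symmetrized and antisymmetrized generators, and then conclude by Proposition~\ref{Proposition:K}. Concretely, set $a.b = ab+ba$ and $[a,b]=ab-ba$, so that $\F(x)=\Mag$ is equally presented by the commutative generator ``$.$'' and the anticommutative generator ``$[\,\cdot\,,\cdot\,]$''. Throughout I will use that $[a,b]$, $a.b$ span the same arity-$2$ space as $x,y$, so this is merely a change of generating set.

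First I would polarize. The relation $\RR_2$, namely $(ab)c=(ba)c$, rewrites as $x([a,b],c)=0$, i.e. $\tfrac12\big([a,b].c+[[a,b],c]\big)=0$; and the relation $\RL_{10}$, namely $(ab)c=c(ab)$, rewrites (with $p=ab$) as $[p,c]=0$, i.e. $\tfrac12\big([a.b,c]+[[a,b],c]\big)=0$. Hence both polarized relations lie inside the sum of the three $\mathfrak{S}_3$-stable subspaces spanned respectively by the monomials $[a,b].c$, $[a.b,c]$ and $[[a,b],c]$, each of dimension $3$.

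Next I would identify the \emph{full} relation space by a dimension count, which is the crux of the argument. Since $\Pp_{2;10}$ is the set-operad obtained from $\Mag$ by an equivariant equivalence relation, its arity-$3$ component is spanned by the classes of monomials; here $\RR_2$ and $\RL_{10}$ together merge the twelve monomials of $\Mag(3)$ into three classes of four, so $\dim\Pp_{2;10}(3)=3$ and the relation space is $9$-dimensional. As it is contained in the $9$-dimensional sum of the three monomial orbits above, it must equal that sum. Therefore the relations are equivalent to the three \emph{monomial} relations $[a,b].c=0$, $[a.b,c]=0$, $[[a,b],c]=0$ (equivalently $a.[b,c]=0$, $[a,b.c]=0$, $[a,[b,c]]=0$), which is precisely the connected sum of $\CMag$ and $\ANil$.

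This exhibits $\Pp_{2;10}$ as a quadratic monomial operad in the generators ``$.$'' and ``$[\,\cdot\,,\cdot\,]$'', so Proposition~\ref{Proposition:K} yields that it is Koszul; the generating series can then be read off from the connected-sum decomposition, as $\ANil$ contributes only in arities $\le 2$. The one delicate point — exactly as in the proofs of $\Pp_{2;2}$, $\Pp_{10}$ and $\Pp_{11}$ — is the passage from the two \emph{sum} relations produced by polarization to the genuinely monomial relations: this is not automatic, and it rests on the dimension count showing that the equivariant span of the polarized relations fills out the entire direct sum of monomial orbits rather than only the two ``diagonal'' combinations. Once that identification is made, Koszulness is immediate.
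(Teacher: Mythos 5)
Your proof is correct and follows the same route as the paper: polarize, recognize the quotient as the connected sum of $\CMag$ and $\ANil$, and invoke Proposition~\ref{Proposition:K} for quadratic monomial operads. The dimension count you supply (three classes of four monomials, hence a $9$-dimensional relation space that must fill the direct sum of the three monomial orbits) is exactly the justification the paper leaves implicit when it asserts that the polarized relations are the monomial ones $[[a,b],c]=0$, $[a,b].c=0$, $[a,b.c]=0$, so your write-up is, if anything, more complete on that point.
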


\begin{proof}
  Let us polarize the relations. We get:
  \begin{itemize}
    \item $[[a, b], c]=0$,
    \item $[a, b].c=0$,
    \item $[a, b.c]=0$.
  \end{itemize}
  One may recognize the connected sum of $\CMag$ and $\ANil$. By Proposition \ref{Proposition:K}, $\Pp_{2;10}$ is Koszul. Moreover, its generating series can be computed and is $1-\sqrt{1-2t}+\frac{1}{2}t^2$.
\end{proof}

All the cases have been checked. To compute the formal power series of those operads, we use two construction. The connected sum, denoted $\#$, which is the coproduct of unital algebraic operads, and distributive laws. We refer to \cite[Section 8.6]{AlgOp} for an introduction to distributive laws.
We can now state the main theorem of this paper:

\begin{thm}\label{thm:main}
  Let $\Pp$ a Koszul set-operad generated by one element of arity $2$, then $\Pp$ is isomorphic to one of the $11$ following operads:
  \begin{itemize}
    \item $\Mag$ the magmatic operad, and $f_\Pp(t)=\frac{1}{2}(1-\sqrt{1-4t})$;
    \item $\NAP$ the non-associative permutative operad, and $f_\Pp$ is the Euler tree function;
    \item $\Pp_{10}=\CMag\circ\ANil$ with the relation $[a.b, c]=0$, and $f_\Pp(t)=1-\sqrt{1-2t-t^2}$;
    \item $\Pp_{2;2}=\ANil\circ\CMag$ with the relation $[a, b].c=0$, and $f_\Pp(t)=2-t-\sqrt{1-2t}$;
    \item $\Pp_{11}=\CMag\#\AMag$, and $f_\Pp(t)=2-t-\sqrt{1-2t}$;
    \item $\Ass$ the associative operad, and $f_\Pp(t)=\frac{t}{1-t}$;
    \item $\Pp_{2;10}=\CMag\#\ANil$, and $f_\Pp(t)=1-\sqrt{1-2t}+\frac{1}{2}t^2$;
    \item $\Perm$ the permutative operad and $f_\Pp(t)=t\exp(t)$;
    \item $\LA^!$ the Koszul dual of the Lie admissible operad, and $f_\Pp(t)=\exp(t)-1+\frac{t^2}{2}$;
    \item $\CMag$ the commutative magmatic operad, and $f_\Pp(t)=(1-\sqrt{1-2t})$;
    \item $\Com$ the commutative operad, and $f_\Pp(t)=\exp(t)-1$.
  \end{itemize}
  Moreover, only $\Ass$, $\Pp_{2;2}$ and $\Pp_{10}$ are self-dual. And only $\NAP$ and $\Perm$ do not inherit the reverse automorphism from $\Mag$.
\end{thm}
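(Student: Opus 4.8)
The plan is to collate the results already established in the previous sections and then settle the two auxiliary assertions by a finite, operad-by-operad inspection. I would begin by separating the case of a symmetric generator from that of a generator without symmetry. In the symmetric case, the discussion opening Section~4 shows that the only quadratic set-operads are $\Com$ and $\CMag$, both Koszul, which accounts for two entries of the list. In the asymmetric case, Lemma~\ref{Lemma:KS} together with Proposition~\ref{prp:enum} guarantees that $\Pp$ is isomorphic to $\Mag$, to one of the eleven operads $\Pp_i$, or to one of the forty-five operads $\Pp_{i;j}$, so it suffices to decide Koszulness within this finite family of fifty-seven operads.

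Next I would prune the family. Proposition~\ref{prp:iso} merges the enumerated operads into isomorphism classes, and Lemma~\ref{lem:known} identifies $\Pp_1\cong\NAP$, $\Pp_6\cong\Ass$, the two copies of $\Perm$, and the six copies of $\LA^!$; this is exactly the passage from fifty-seven to the thirty-three operads whose status is still open. The non-Koszul propositions of Section~6 then discard twenty-nine of them---the twenty-five eliminated by the Ginzburg--Kapranov Hilbert-series criterion, together with $\Pp_{3;3}$, $\Pp_{5;5}$, $\Pp_{4;6}$ and $\Pp_{4;9}$---while the four theorems of Section~7 prove that $\Pp_{2;2}$, $\Pp_{10}$, $\Pp_{11}$ and $\Pp_{2;10}$ are Koszul. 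Gathering the survivors of the asymmetric branch ($\Mag$, $\NAP$, $\Ass$, $\Perm$, $\LA^!$ and the four new operads) with $\Com$ and $\CMag$ from the symmetric branch produces exactly the eleven operads listed, and their Hilbert series are those recorded in Table~\ref{fg:table} or computed in the cited statements.

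It remains to verify the two closing claims. For self-duality I would compute each Koszul dual by the procedure of \cite[Subsection~7.6.1]{AlgOp}. Three operads are certified self-dual in their defining theorems, namely $\Ass$, $\Pp_{2;2}$ and $\Pp_{10}$; for the remaining eight I must check that the dual differs from the operad itself. For the classical operads this is standard, the duals being $\Nil$, $\KDNAP$, $\PL$, $\LA$ and $\ANil$ for $\Mag$, $\NAP$, $\Perm$, $\LA^!$ and $\CMag$, and $\Lie$ for $\Com$, none isomorphic to its source; for $\Pp_{11}$ and $\Pp_{2;10}$ one computes the dual directly and checks it is not isomorphic to the original. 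For the reverse automorphism I would use that the reverse involution $\sigma$ of $\Mag$ exchanges the two orbits of $\Mag(3)$, so a quotient $\Mag/\langle\R\rangle$ inherits $\sigma$ exactly when $\rev(\R)=\R$ as relations. This holds trivially for $\Mag$ and for the commutative operads $\Com$ and $\CMag$, it holds for $\Ass$ because associativity is reverse-symmetric, and it holds for $\LA^!$, $\Pp_{10}$, $\Pp_{2;2}$, $\Pp_{11}$ and $\Pp_{2;10}$ because in their polarized presentations the involution merely negates each bracket $[a,b]$ and fixes each symmetric product $a\cdot b$, leaving every relation unchanged; a direct computation then shows that the defining classes of $\NAP$ and $\Perm$ are not $\sigma$-stable.

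The bulk of the mathematical content lives in the earlier sections, so the principal difficulty here is completeness rather than depth: one must be certain that the reduction from fifty-seven to thirty-three operads loses no isomorphism class, and that the negative halves of the final two assertions---that the eight non-self-dual operads really fail to be self-dual, and that every operad except $\NAP$ and $\Perm$ really does inherit the reverse automorphism---are checked exhaustively rather than merely asserted. I expect this exhaustiveness check, and in particular the explicit dual computations for $\Pp_{11}$ and $\Pp_{2;10}$, to be the main obstacle.
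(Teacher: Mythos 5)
Your proposal follows essentially the same route as the paper, whose ``proof'' of Theorem~\ref{thm:main} is simply the observation that all $57$ cases from Proposition~\ref{prp:enum} have been disposed of by Propositions~\ref{prp:iso}, Lemma~\ref{lem:known}, the non-Koszulness results of Section~6, and the four Koszulness theorems of Section~7, with the symmetric-generator case ($\Com$, $\CMag$) handled separately. Your treatment of the self-duality and reverse-automorphism assertions is in fact more explicit than the paper, which merely asserts them in the statement.
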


All those operads satisfy Conjecture \ref{cnj:main}.

\begin{crl}
  The Hilbert series of a Koszul symmetric set-operad generated by one element of arity $2$ is differential algebraic of order $1$ over $\mathbb{Z}[t]$.
\end{crl}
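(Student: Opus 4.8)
The plan is to read the statement off directly from Theorem~\ref{thm:main}. That theorem already produces the complete list of the eleven Koszul set-operads generated by one element of arity $2$, \emph{together} with a closed form for each Hilbert series; since being differential algebraic of order $1$ over $\mathbb{Z}[t]$ is a property of the series alone, it suffices to verify it for each of the eleven explicit functions. I would sort them into two families: the seven \emph{algebraic} series ($\Mag$, $\CMag$, $\Ass$, $\Pp_{10}$, $\Pp_{2;2}$, $\Pp_{11}$, $\Pp_{2;10}$), each a polynomial in $t$ plus a rational function or a square root of a polynomial, and the four \emph{exponential} series ($\Com$, $\Perm$, $\LA^!$, and $\NAP$, the last being the Euler tree function).

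For the algebraic family I would invoke the elementary fact that any $f$ algebraic over $\mathbb{Q}(t)$ is automatically order-$1$ differential algebraic. Concretely, let $P(t,u)\in\mathbb{Z}[t,u]$ be (a primitive integer clearing of) the minimal polynomial of $f$, so that $P(t,f)=0$ and $P_u(t,u)\not\equiv 0$ since $\deg_u P\ge 1$. Differentiating yields
$$ P_t(t,f) + P_u(t,f)\,f' = 0, $$
which is a nonzero polynomial relation in $t$, $f$, $f'$ with coefficients in $\mathbb{Z}[t]$, and it is genuinely of order $1$ precisely because $P_u\not\equiv 0$. This settles all seven algebraic cases at once; in practice each $P$ is obtained just by isolating the square root and squaring (e.g. $f^2-f+t=0$ for $\Mag$), or by clearing the single denominator for $\Ass$.

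For the four exponential series I would eliminate the exponential between $f$ and $f'$ by hand. Writing $E=\exp(t)$, one finds $f'-f-1=0$ for $\Com$; using $E=f/t$ one gets $t f' = (1+t)f$ for $\Perm$; and using $E=f+1-\tfrac{t^2}{2}$ one gets $2f'-2f+t^2-2t-2=0$ for $\LA^!$. For $\NAP$, writing $T$ for the Euler tree function, the defining relation $T=t\exp(T)$ gives, after differentiation and substitution of $\exp(T)=T/t$, the identity $t(1-T)\,T'=T$. Each of these is a nontrivial polynomial identity in $t$, $f$, $f'$ over $\mathbb{Z}[t]$, so all four exponential series are order-$1$ differential algebraic as well, completing the verification. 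There is no substantive obstacle here: the classification has already reduced an a priori infinite problem to eleven explicit functions, and the only points demanding a little care are checking that the differentiated relation in the algebraic case is not identically zero (guaranteed by using the minimal polynomial, so that $P_u\not\equiv 0$) and carrying out the elimination of $\exp$ correctly in the four transcendental cases.
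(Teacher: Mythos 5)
Your proposal is correct and follows essentially the same route as the paper, which deduces the corollary by checking Conjecture~\ref{cnj:main} case by case against the eleven explicit Hilbert series of Theorem~\ref{thm:main} (the order-$1$ equations for the known operads being recorded in Table~\ref{fg:table}). Your uniform observation that an algebraic series is automatically order-$1$ differential algebraic is a tidy way to dispatch seven of the cases at once, and your eliminations of $\exp$ in the remaining four match the equations the paper tabulates.
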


\subsection*{Funding}
{\small The author was partially supported by the ANR project HighAGT (ANR-20-CE40-0016) and is funded by a postdoctoral fellowship of the ERC Starting Grant “Low Regularity Dynamics via Decorated Trees” (LoRDeT) of Yvain Bruned (grant agreement No. 101075208).}

\bibliographystyle{plain}
\bibliography{Bibly}
\textsc{Institut Élie Cartan de Lorraine, UMR 7502, Faculté des Sciences et Technologies, Boulevard des Aiguillettes, 54506 Vandœuvre-lès-Nancy, France}
\textit{Email address:} \texttt{paul.laubie@univ-lorraine.fr}

\section*{\hypertarget{Appendix}{Appendix}}

\begin{table}\caption{Summary table of Hilbert series failing the Ginzburg-Kapranov criterion and their first obstruction.}\label{fg:Hseries}
  {\renewcommand{\arraystretch}{2}
  \begin{tabular}{l|l|l}
	  $f_\Pp(t)$ & first negative term & Operad\\ [-10pt]
	  & in $\rev_t(-f_\Pp(-t))$ &  \\
    \hline
	  $t+\frac{2}{2!}t^2+\frac{9}{3!}t^3+\frac{60}{4!}t^4+\frac{525}{5!}t^5+\mathcal{O}(t^6)$ & $\frac{-15}{5!}t^5$ & $\Pp_2$ \\
	  $t+\frac{2}{2!}t^2+\frac{9}{3!}t^3+\frac{60}{4!}t^4+\frac{520}{5!}t^5+\mathcal{O}(t^6)$ & $\frac{-10}{5!}t^5$ & $\Pp_3$ \\
	  $t+\frac{2}{2!}t^2+\frac{8}{3!}t^3+\frac{40}{4!}t^4+\frac{210}{5!}t^5+\mathcal{O}(t^6)$ & $\frac{-50}{5!}t^5$ & $\Pp_4$ \\
	  $t+\frac{2}{2!}t^2+\frac{7}{3!}t^3+\frac{29}{4!}t^4+\frac{146}{5!}t^5+\mathcal{O}(t^6)$ & $\frac{-46}{5!}t^5$ & $\Pp_5$ \\
	  $t+\frac{2}{2!}t^2+\frac{6}{3!}t^3+\frac{12}{4!}t^4+\frac{20}{5!}t^5+\mathcal{O}(t^6)$ & $\frac{-140}{5!}t^5$ & $\Pp_7,\Pp_8$ \\
	  $t+\frac{2}{2!}t^2+\frac{6}{3!}t^3+\frac{12}{4!}t^4+\frac{1}{5!}t^5+\mathcal{O}(t^6)$ & $\frac{-121}{5!}t^5$ & $\Pp_9$ \\
	  $t+\frac{2}{2!}t^2+\frac{6}{3!}t^3+\frac{14}{4!}t^4+\frac{30}{5!}t^5+\mathcal{O}(t^6)$ & $\frac{-90}{5!}t^5$ & $\Pp_{1;1},\Pp_{1;3}$ \\
	  $t+\frac{2}{2!}t^2+\frac{6}{3!}t^3+\frac{20}{4!}t^4+\frac{75}{5!}t^5+\frac{312}{6!}t^6+\mathcal{O}(t^7)$ & $\frac{-318}{6!}t^6$ & $\Pp_{1;2}$ \\
	  $t+\frac{2}{2!}t^2+\frac{6}{3!}t^3+\frac{14}{4!}t^4+\frac{21}{5!}t^5+\mathcal{O}(t^6)$ & $\frac{-81}{5!}t^5$ & $\Pp_{2;3}$ \\
	  $t+\frac{2}{2!}t^2+\frac{5}{3!}t^3+\frac{6}{4!}t^4+\frac{10}{5!}t^5+\frac{18}{6!}t^6+\mathcal{O}(t^7)$ & $\frac{-2572}{6!}t^6$ & $\Pp_{1;4}$ \\
	  $t+\frac{2}{2!}t^2+\frac{5}{3!}t^3+\frac{8}{4!}t^4+\frac{18}{5!}t^5+\frac{55}{6!}t^6+\mathcal{O}(t^7)$ & $\frac{-1541}{6!}t^6$ & $\Pp_{2;4}$ \\
	  $t+\frac{2}{2!}t^2+\frac{5}{3!}t^3+\frac{2}{4!}t^4+\frac{2}{5!}t^5+\mathcal{O}(t^6)$ & $\frac{-112}{5!}t^5$ & $\Pp_{3;4}$ \\
	  $t+\frac{2}{2!}t^2+\frac{4}{3!}t^3+\frac{2}{4!}t^4+\frac{2}{5!}t^5+\frac{2}{6!}t^6+\frac{2}{7!}t^7+\mathcal{O}(t^8)$ & $\frac{-26238}{7!}t^7$ & $\Pp_{4;4}$ \\
	  $t+\frac{2}{2!}t^2+\frac{4}{3!}t^3+\frac{5}{4!}t^4+\frac{6}{5!}t^5+\frac{7}{6!}t^6+\frac{8}{7!}t^7+\frac{9}{8!}t^8+\mathcal{O}(t^9)$ & $\frac{-95669}{8!}t^8$ & $\Pp_{1;5},\Pp_{2;5}$ \\
	  $t+\frac{2}{2!}t^2+\frac{4}{3!}t^3+\frac{2}{4!}t^4+\frac{1}{5!}t^5+\frac{1}{6!}t^6+\frac{1}{7!}t^7+\mathcal{O}(t^8)$ & $\frac{-29093}{7!}t^7$ & $\Pp_{3;5}$ \\
	  $t+\frac{2}{2!}t^2+\frac{3}{3!}t^3+\frac{1}{4!}t^4+\frac{1}{5!}t^5+\frac{1}{6!}t^6+\frac{1}{7!}t^7+\frac{1}{8!}t^8+\frac{1}{9!}t^9$ & $\frac{-802543633}{11!}t^{11}$ & $\Pp_{1;8},\Pp_{1;9},\Pp_{2;7}$ \\[-10pt]
	  $+\frac{1}{10!}t^{10}+\frac{1}{11!}t^{11}+\mathcal{O}(t^{12})$ & & $\Pp_{3;6},\Pp_{3;7},\Pp_{3;8}$ \\
	  $t+\frac{2}{2!}t^2+\frac{3}{3!}t^3+\frac{2}{4!}t^4+\frac{2}{5!}t^5+\frac{2}{6!}t^6+\frac{2}{7!}t^7+\frac{2}{8!}t^8+\frac{2}{9!}t^9$ & $\frac{-1080639958361062}{15!}t^{15}$ & $\Pp_{4;5}$ \\[-10pt]
    $+\frac{2}{10!}t^{10}+\frac{2}{11!}t^{11}+\frac{2}{12!}t^{12}+\frac{2}{13!}t^{13}+\frac{2}{14!}t^{14}+\frac{2}{15!}t^{15}+\mathcal{O}(t^{16})$ & \\
  \end{tabular}}
\end{table}

\begin{table}\caption{Summary table of Koszul operads generated by one element of arity $2$ in \operadia.}\label{fg:table}
  \renewcommand{\arraystretch}{2}
  \scalebox{.84}{
  \begin{tabular}{l|l|l|l|l}
      Operad & Entry in the OEIS & Hilbert series & Equation & Reference\\ 
      \hline
      $\Lie$      & \oeis{A000142}  & $-\ln(1-t)$                                   & $(1-t)f'-1$                             &  \\
      $\Com$      & \oeis{A000012}  & $\exp(t)-1$                                   & $f-f'+1$                                &  \\
      $\Ass$      & \oeis{A000142}  & $\frac{t}{1-t}$                               & $(1-t)f-t$                              & \cite{Ass} \\
      $\Pois$     & \ditto          & \ditto                                        & \ditto                                  & \cite{Poiss} \\
      $\Leib$     & \ditto          & \ditto                                        & \ditto                                  & \cite{Leib} \\
      $\Zinb$     & \ditto          & \ditto                                        & \ditto                                  & \cite{Zinb} \\
      $\LeftNil$  & \ditto          & \ditto                                        & \ditto                                  & \cite{LeftNil} \\
      $\PL$       & \oeis{A000169}  & $\rev_t(t\exp(t))$                              & $tff'-tf'+f$                            & \cite{PreLie} \\
      $\NAP$      & \ditto          & \ditto                                        & \ditto                                  & \cite{NAP} \\
      $\Perm$     & \oeis{A000027}  & $t\exp(t)$                                    & $(1+t)f-tf' $                           & \cite{Perm} \\
      $\KDNAP$    & \ditto          & \ditto                                        & \ditto                                  & \cite{NAP} \\
      % $\Mag$      & \oeis{A001813}  & $\frac{1-\sqrt{1-4t}}{2}$                     & $f^2-f+t$                               & \\
      % $\Nil$      & None            & $t+t^2$                                       & $f-(t+t^2)$                             & \\
      % $\CMag$     & \oeis{A001147}  & $(1-\sqrt{1-2t})$                             & $f-\frac{f^2}{2}-t$                     & \\
      % $\AMag$     & \ditto          & \ditto                                        & \ditto                                  & \\
      % $\ANil$     & None            & $t+\frac{t^2}{2}$                             & $f-\left(t+\frac{t^2}{2}\right)$        & \\
      % $\CNil$     & \ditto          & \ditto                                        & \ditto                                  & \\
      $\Alia$     & \oeis{A220433}  & $\rev_t\left(-t+t^2-\frac{t^3}{6}\right)$       & $f-f^2+\frac{f^3}{6}-t$                 & \cite{Alia} \\
      $\LeftAlia$ & \ditto          & \ditto                                        & \ditto                                  & \cite{Alia} \\
      $\KDAlia$   & None            & $t+t^2+\frac{t^3}{6}$                         & $f-\left(t+t^2+\frac{t^3}{6}\right)$    & \\
      $\LieAdm$   & \oeis{A337017}  & $\rev_t\left(1-\frac{t^2}{2} - \exp(-t)\right)$ & $f'\left(\frac{f^2}{2}+f+t-1\right)+1$  & \cite{LieAdm} \\
      $\KDLieAdm$ & \oeis{A294619}  & $\frac{t^2}{2} + \exp(t)-1$                   & $f-f'-\left(\frac{t^2}{2}-2t-1\right) $ & \\
      $\Bess$     & \oeis{A001515}  & $\exp(1-\sqrt{1-2t})-1$                       & $(1-2t)(f'-f-1)^2-(f+1)^2 $             & \cite{Bess}
  \end{tabular}}
\end{table}

\end{document}